\numberwithin{equation}{section}
\newcommand{\R}{\mathbb R}
\newcommand{\N}{\mathbb N}
\newtheorem{theorem}{Theorem}[section]
\newtheorem{lem}{Lemma}[section]
\newtheorem{rem}{Remark}[section]
\begin{document}
\title{Generalized Positive Linear Operators based on PED and IPED}

\author{Naokant Deo and Minakshi Dhamija}

\date{}

\address{Delhi Technological University\newline
\indent Formerly Delhi College of Engineering\newline
\indent Department of Applied Mathematics\newline
\indent Bawana Road, 110042 Delhi, India}
\email{naokantdeo@dce.ac.in, dr\_naokant\_deo@yahoo.com}
\email{minakshidhamija11@gmail.com}

\keywords{Generalized operators, P\'{o}lya-Eggenberger distribution, Modulus of continuity, Weighted approximation}

\subjclass[2010]{41A25, 41A36}
\begin{abstract}
The paper deals with  generalized positive linear operators based on P\'olya-Eggenberger distribution(PED) as well as inverse P\'olya-Eggenberger distribution(IPED). Initially, we give the moments by using Stirling numbers of second kind and then establish direct results, convergence with the help of local and weighted approximation of proposed operators.
\end{abstract}

\maketitle\markboth{Naokant Deo and Minakshi Dhamija}{Generalized Positive Linear Operators}

\section{Introduction}

In the year $1968$, D. D. Stancu \cite{Stancu:68} introduced a new class of positive linear operators based on P\'{o}lya-Eggenberger distribution(PED) and associated to a real-valued function on $[0,1]$ as:
\begin{equation}\label{B1}
B_n^{(\alpha )}\left( {f;x} \right) =\sum\limits_{k = 0}^n \binom{n}{k}\frac{x^{[k,-\alpha]}(1-x)^{[n-k,-\alpha]}}{1^{[n,-\alpha]}}f\left({\frac{k}{n}}\right),
\end{equation}
where $\alpha$ is a non-negative parameter which may depend only on the natural number $n$ and $t^{[n,h]} = t(t - h)(t - 2h)\cdot\ldots\cdot(t - \overline{n - 1}h)$, $t^{[0,h]} = 1$ represents the factorial power of $t$ with increment $h$.

Later on Stancu \cite{Stancu:70} introduced a generalized form of the Baskakov operators based on inverse P\'{o}lya-Eggenberger distribution(IPED) for a real-valued function bounded on $\left[ {0,\infty } \right)$, given by
\begin{equation}\label{V1}
V_n^{(\alpha )}\left( {f;x} \right) =\sum\limits_{k=0}^{\infty}\binom{n+k-1}{k}\frac{1^{[n,-\alpha]}x^{[k,-\alpha]}}{(1+x)^{[n+k,-\alpha]}}f\left( {\frac{k}{n}}\right).
\end{equation}

Now we consider new positive linear operators ${L_n^{(\alpha )}}$, for each $f$, real valued function bounded on interval $I$, as:
\begin{equation}\label{L1}
 {L_n^{(\alpha )}(f;x)}= \sum\limits_{k} {w_{n,k}^{(\alpha )}(x)f\left( {\frac{k}{n}} \right)},\;\;\;x \in I,\;\;n=1,2,...,
\end{equation}
where $\alpha = \alpha(n) \rightarrow 0~~~ \text{when}~~~ n \rightarrow \infty$, $p \;\&\; k$ are nonnegative integers and for $\lambda =-1,0$, we have
\begin{align*}
 \omega _{n,k}^{(\alpha )}(x) &= \frac{{n + p}}{{n + p + \overline {\lambda  + 1} k}}\left( {\begin{array}{*{20}{c}}
  {n + p + \overline {\lambda  + 1} k} \\
  k
\end{array}} \right)\frac{{\prod\limits_{i = 0}^{k - 1} {\left( {x + i\alpha } \right)} \prod\limits_{i = 0}^{n + p + \lambda k - 1} {\left( {1 + \lambda x + i\alpha } \right)} }}{{\prod\limits_{i = 0}^{n + p + \overline {\lambda  + 1} k - 1} {\left( {1 + \overline {\lambda  + 1} x + i\alpha } \right)} }}\\
&= \frac{{n + p}}{{n + p + \overline {\lambda  + 1} k}}\left( {\begin{array}{*{20}{c}}
{n + p + \overline {\lambda  + 1} k}\\
k
\end{array}} \right)\frac{{{x^{[k, - \alpha ]}}{{(1 + \lambda x)}^{[n + p + \lambda k, - \alpha ]}}}}{{{{\left( {1 + \overline {\lambda  + 1} x} \right)}^{[n + p + \overline {\lambda  + 1} k, - \alpha ]}}}},
\end{align*}
by using the notation $ \overline {m-r}\alpha=(m-r)\alpha$. Operators \eqref{L1} is generalized form of above two operators \eqref{B1} and \eqref{V1} and associated with PED and IPED ~\cite{EP:23}.

Kantorovich form of Stancu operators~\eqref{B1} had been given by Razi~\cite{Razi:89} and he studied its convergence properties and degree of approximation. Ispir et al.~\cite{IAK:15} also discussed the Kantorovich form of operators~\eqref{B1} and they estimated the rate of convergence for absolutely continuous functions having a derivative coinciding a.e. with a function of bounded variation.
Recently, Miclaus \cite{Miclaus:14} established some approximation results for the Stancu operators~\eqref{B1} and its Durrmeyer type integral modification was studied by Gupta and Rassias \cite{GR:14} and obtained some direct results which include an asymptotic formula, local and global approximation results for these operators in terms of modulus of continuity.

Very recently Durrmeyer type modification of generalized Baskakov operators~\eqref{V1} associated with IPED were introduced by Dhamija and Deo~\cite{DD:16} and studied the moments with the help of Vandermonde convolution formula and then gave approximation properties of these operators which include uniform convergence and degree of approximation. Deo et al.~\cite{DDM:16} also investigated approximation properties of Kantorovich variant of operators~\eqref{V1} and they established uniform convergence, asymptotic formula and degree of approximation. Various Durrmeyer type modifications and then their local as well as weighted approximation along with some other approximation behaviour have been discussed by many authors e.g.,(~\cite{Deo:12},~\cite{JDD:14},~\cite{GA:14}).

The main object of this paper is to find moments with the help of Stirling numbers of second kind and estimate the rate of convergence of operators~\eqref{L1} by studing local and weighted approximation theorems.

Throughout this paper we consider interval $I=\left[ {0,\infty } \right)$ for $\lambda  = 0$ and $I=\left[ {0,1} \right]$ for $\lambda  =  - 1$.

\section{Special Cases}
It is easy to understand that the special cases of operators~(\ref{L1}) are as follows:
\begin{enumerate}
\item For $\lambda  =  - 1$; we have
\begin{enumerate}
\item [(i)] When $\alpha  \ne 0 \ne p$
\[{L_n^{(\alpha )}(f;x)} = \sum\limits_{k = 0}^{n + p} {\left( {\begin{array}{*{20}{c}}
  {n + p} \\
  k
\end{array}} \right)} \frac{{\prod\limits_{i = 0}^{k - 1} {\left( {x + i\alpha } \right)} \prod\limits_{i = 0}^{n + p - k - 1} {\left( {1 - x + i\alpha } \right)} }}{{\prod\limits_{i = 0}^{n + p - 1} {\left( {1 + i\alpha } \right)} }}f\left( {\frac{k}{n}} \right).\]
This leads to Schurer type Stancu operators.
\item [(ii)] When $\alpha  \ne 0,p = 0$ we get alternate form of operators~\eqref{B1} as:
\[{L_n^{(\alpha )}(f;x)} = \sum\limits_{k = 0}^n {\left( {\begin{array}{*{20}{c}}
  n \\
  k
\end{array}} \right)} \frac{{\prod\limits_{i = 0}^{k - 1} {\left( {x + i\alpha } \right)} \prod\limits_{i = 0}^{n - k - 1} {\left( {1 - x + i\alpha } \right)} }}{{\prod\limits_{i = 0}^{n - 1} {\left( {1 + i\alpha } \right)} }}f\left( {\frac{k}{n}} \right).\]
\textbf{\emph{Particular case:}} when $\alpha  = 1/n, p = 0$ we obtain Lupa\c{s} and Lupa\c{s} operators~\cite{Lapus:87} as:
   \[{L_n^{(\alpha )}(f;x)} = \frac{{2\left( {n!} \right)}}{{\left( {2n} \right)!}}\sum\limits_{k = 0}^n {\left( {\begin{array}{*{20}{c}}
  n \\
  k
\end{array}} \right)} \prod\limits_{i = 0}^{k - 1} {\left( {nx + i} \right)} \prod\limits_{i = 0}^{n - k - 1} {\left( {\overline {1 - x} n + i} \right)} f\left( {\frac{k}{n}} \right).\]
\item [(iii)] When $\alpha  = 0, p \ne 0$ we have Bernstein-Schurer operators~\cite{Schurer:62} as:
\[{L_n^{(\alpha )}(f;x)} = \sum\limits_{k = 0}^{n + p} {\left( {\begin{array}{*{20}{c}}
  {n + p} \\
  k
\end{array}} \right){x^k}{{(1 - x)}^{n + p - k}}f\left( {\frac{k}{n}} \right)}.\]
\item [(iv)]  When $\alpha  = 0, p = 0$ we obtain original Bernstein operators~\cite{Bernstein:12} as:
\[{L_n^{(\alpha )}(f;x)} = \sum\limits_{k = 0}^n {\left( {\begin{array}{*{20}{c}}
  n \\
  k
\end{array}} \right){x^k}{{(1 - x)}^{n - k}}f\left( {\frac{k}{n}} \right)} .\]
\end{enumerate}

\item For $\lambda  = 0$; we obtain the following operators:
\begin{enumerate}
\item [(i)] When $\alpha  \ne 0 \ne p$ we have Stancu-Schurer operators as:
\[{L_n^{(\alpha )}(f;x)} = \sum\limits_{k = 0}^\infty  {\left( {\begin{array}{*{20}{c}}
  {n + p + k - 1} \\
  k
\end{array}} \right)\frac{{\prod\limits_{i = 0}^{k - 1} {\left( {x + i\alpha } \right)} \prod\limits_{i = 0}^{n + p - 1} {\left( {1 + i\alpha } \right)} }}{{\prod\limits_{i = 0}^{n + p + k - 1} {\left( {1 + x + i\alpha } \right)} }}f\left( {\frac{k}{n}} \right),} \]
\item [(ii)] When $\alpha  \ne 0,p = 0$ we obtain alternate form of operators~\eqref{V1} as:
\[{L_n^{(\alpha )}(f;x)} = \sum\limits_{k = 0}^\infty  {\left( {\begin{array}{*{20}{c}}
  {n + k - 1} \\
  k
\end{array}} \right)\frac{{\prod\limits_{i = 0}^{k - 1} {\left( {x + i\alpha } \right)} \prod\limits_{i = 0}^{n - 1} {\left( {1 + i\alpha } \right)} }}{{\prod\limits_{i = 0}^{n + k - 1} {\left( {1 + x + i\alpha } \right)} }}f\left( {\frac{k}{n}} \right),} \]
\item [(iii)] When $\alpha  = 0,p \ne 0$ we get Baskakov-Schurer operators as:
\[{L_n^{(\alpha )}(f;x)} = \sum\limits_{k = 0}^\infty  {\left( {\begin{array}{*{20}{c}}
  {n + p + k - 1} \\
  k
\end{array}} \right)\frac{{{x^k}}}{{{{\left( {1 + x} \right)}^{n + p + k}}}}f\left( {\frac{k}{n}} \right),} \]
\item [(iv)]  When $\alpha  = 0,p = 0$ we obtain classical Baskakov operators~\cite{Baskakov:57} as:
 \[{L_n^{(\alpha )}(f;x)} = \sum\limits_{k = 0}^\infty  {\left( {\begin{array}{*{20}{c}}
  {n + k - 1} \\
  k
\end{array}} \right)\frac{{{x^k}}}{{{{\left( {1 + x} \right)}^{n + k}}}}f\left( {\frac{k}{n}} \right).} \]
\end{enumerate}
\end{enumerate}

\section{Preliminary Results}
In $1730$, J. Stirling~\cite{Stirling:30} introduced an important concept of numbers, useful in various branches of mathematics like number theory, calculus of Bernstein polynomials etc., known as Stirling numbers of first kind and afterwards Stirling numbers of second kind. Let $\R$ be the set of real numbers and $\N$, a collection of natural numbers with $\N_0= \N \cup \{0\}$. For $x \in \R$ and $i, j \in \N_0$, let $S(j, i)$ denote the Stirling numbers of second kind then
$${x^j} = \sum\limits_{i = 0}^j {S\left( {j,i} \right){{\left( x \right)}_i}},$$
with alternate form
\begin{equation}\label{x}
{x^j} = \sum\limits_{i = 0}^{j - 1} {S\left( {j,j - i} \right){{\left( x \right)}_{j - i}}},
\end{equation}
and has the following properties:
\begin{equation}\label{y}
S\left( {j,i} \right): = \left\{ \begin{gathered}
  1,\;\text{if}\;j = i = 0;j = i\;\text{or}\;j > 1,i = 1 \hfill \\
  0,\;\text{if}\;j > 0,i = 0 \hfill \\
  0,\;\text{if}\;j < i \hfill \\
  i.S\left( {j - 1,i} \right) + S\left( {j - 1,i - 1} \right),\;\text{if}\;j,i > 1. \hfill \\
\end{gathered}  \right.
\end{equation}
These Stirling numbers of second kind are very useful in calculating the moments of linear positive operators especially for higher order moments. In~\cite{Miclaus:12-1, Miclaus:12-2}, Dan obtained higher order moments for Bernstein type operators by using these numbers. In what follows, we shall find the moments of ${L_n^{(\alpha )}}$ given by ~\eqref{L1}with the help of same. Let us recall the monomials $e_j(x) = x^{j}, j \in \N_0$ be the test functions.
\begin{lem}\label{lem-1}
For the monomial \;$t^{j},$ where $j \in \N,$ and $t, x\in I$, we have
\[L_n^{(\alpha )}({t^{j}};x) = \frac{1}{{{n^j}}}\sum\limits_{i = 0}^{j - 1} {S\left( {j,j - i} \right)\phi _{n + p}^{j - i}\frac{{{x^{\left[ {j - i, - \alpha } \right]}}}}{{{1^{\left[ {j - i + \left( {\lambda  + 1} \right), - \alpha } \right]}}}}} ,\]
where
\[\phi _{n + p}^{j - i} = \left\{ \begin{gathered}
  {\left( {n + p} \right)_{\left( {j - i} \right)}},\;\lambda  =  - 1 \hfill \\
  {\left( {n + p} \right)^{\left( {j - i} \right)}},\;\lambda  = 0 \hfill \\
\end{gathered}  \right.,\]
${\left( y \right)_n}: = \prod\nolimits_{i = 0}^{n - 1} {\left( {y - i} \right)} ,{\left( y \right)_0}: = 1$ and ${\left( y \right)^n}: = \prod\nolimits_{i = 0}^{n - 1} {\left( {y + i} \right)} ,{\left( y \right)^{\left( 0 \right)}}: = 1$ are respectively the falling factorial and rising factorial with $y\in \R$ and $n\in \N.$
\end{lem}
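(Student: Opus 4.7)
My plan is to reduce the monomial $t^{j}$ to a combination of falling factorials $(k)_{j-i}$ via \eqref{x}, so that $L_n^{(\alpha)}(t^{j};x)$ becomes a linear combination of the factorial moments $A_m(x):=\sum_{k}w_{n,k}^{(\alpha)}(x)(k)_m$ for $m=1,\dots,j$. Concretely, writing $(k/n)^{j}=n^{-j}\sum_{i=0}^{j-1}S(j,j-i)(k)_{j-i}$ inside the defining series of $L_n^{(\alpha)}$ and swapping the two summations yields
\[
L_n^{(\alpha)}(t^{j};x) = \frac{1}{n^{j}}\sum_{i=0}^{j-1}S(j,j-i)\,A_{j-i}(x),
\]
so the whole task reduces to showing $A_m(x) = \phi_{n+p}^{m}\,x^{[m,-\alpha]}/1^{[m+(\lambda+1),-\alpha]}$ for every $m\ge 1$.

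I would handle the case $\lambda=-1$ by combining the falling-factorial identity $(k)_m\binom{n+p}{k} = (n+p)_m\binom{n+p-m}{k-m}$ with the splittings $x^{[k,-\alpha]} = x^{[m,-\alpha]}(x+m\alpha)^{[k-m,-\alpha]}$ and $1^{[n+p,-\alpha]} = 1^{[m,-\alpha]}(1+m\alpha)^{[n+p-m,-\alpha]}$. After shifting the summation index to $k'=k-m$, the residual sum is the PED partition-of-unity with parameters $(n+p-m,\,x+m\alpha)$, i.e.\ the Chu--Vandermonde identity for rising factorials of common increment $\alpha$,
\[
\sum_{k'\ge 0}\binom{N}{k'}y^{[k',-\alpha]}z^{[N-k',-\alpha]} = (y+z)^{[N,-\alpha]},
\]
applied with $N=n+p-m$, $y=x+m\alpha$, $z=1-x$. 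The resulting $(1+m\alpha)^{[n+p-m,-\alpha]}$ cancels against the $1^{[n+p,-\alpha]}$ pulled from the weight, leaving $1^{[m,-\alpha]}$ in the denominator; with $\phi_{n+p}^{m}=(n+p)_m$ and $\lambda+1=0$ this matches the claim.

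For the case $\lambda=0$ the analogous step uses the rising-factorial identity $(k)_m\binom{n+p+k-1}{k} = (n+p)^{(m)}\binom{(n+p+m)+(k-m)-1}{k-m}$, together with $x^{[k,-\alpha]} = x^{[m,-\alpha]}(x+m\alpha)^{[k-m,-\alpha]}$ and a splitting of $(1+x)^{[n+p+k,-\alpha]}$ compatible with the shift $k\mapsto k-m$. After setting $k'=k-m$ the leftover series is the IPED partition-of-unity underlying \eqref{V1} with parameters shifted to $(n+p+m,\,x+m\alpha)$; substituting its value $1/1^{[\,\cdot\,,-\alpha]}$ and collecting the surviving factors of $1^{[\,\cdot\,,-\alpha]}$ produces the denominator $1^{[m+1,-\alpha]}$ demanded by the uniform formula with $\lambda+1=1$ and $\phi_{n+p}^{m}=(n+p)^{(m)}$. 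The main obstacle is precisely the bookkeeping in this second case: the decomposition of $(1+x)^{[n+p+k,-\alpha]}$ has to be chosen so that what survives after applying the IPED identity has exactly the right degree to collapse to $1^{[m+1,-\alpha]}$ and produce the extra $(\lambda+1)$-shift distinguishing the two cases. Once this splitting is pinned down, both cases merge into the single uniform statement of the lemma.
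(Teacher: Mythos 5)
Your overall strategy --- expand $k^{j}$ via \eqref{x} into falling factorials, reduce everything to the factorial moments $A_m(x)=\sum_k w_{n,k}^{(\alpha)}(x)(k)_m$, shift the summation index, and invoke a Vandermonde-type normalization identity --- is exactly the intended route (the paper itself only gestures at it by citing Micl\u{a}u\c{s}'s thesis), and your treatment of the case $\lambda=-1$ is complete and correct. The gap is in the case $\lambda=0$, at precisely the step you yourself flag as ``the main obstacle'' and then assert rather than carry out. After the shift $k'=k-m$ the residual series is
\[
\sum_{k'\ge 0}\binom{n+p+m+k'-1}{k'}\frac{(x+m\alpha)^{[k',-\alpha]}}{(1+x)^{[n+p+m+k',-\alpha]}},
\]
and this is \emph{not} the IPED partition of unity with parameters $(n+p+m,\,x+m\alpha)$: that identity would require the denominator $(1+x+m\alpha)^{[n+p+m+k',-\alpha]}$, whose factors start at $1+x+m\alpha$, whereas yours start at $1+x$; the discrepancy is a block of $m$ factors sitting at the $k'$-dependent top end of the product, so it cannot be pulled out of the sum. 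The correct device is the two-parameter form of the identity, $\sum_{k'}\binom{N+k'-1}{k'}a^{[N,-\alpha]}y^{[k',-\alpha]}\big/(a+y)^{[N+k',-\alpha]}=1$, applied with $N=n+p+m$, $y=x+m\alpha$ and $a=1-m\alpha$, so that $a+y=1+x$. Since $(1-m\alpha)^{[n+p+m,-\alpha]}=\bigl[\prod_{j=1}^{m}(1-j\alpha)\bigr]\,1^{[n+p,-\alpha]}$, the factor $1^{[n+p,-\alpha]}$ carried by the weight cancels and one gets
\[
A_m(x)=(n+p)^{(m)}\frac{x^{[m,-\alpha]}}{(1-\alpha)(1-2\alpha)\cdots(1-m\alpha)},
\]
not $(n+p)^{(m)}x^{[m,-\alpha]}/1^{[m+1,-\alpha]}$ with $1^{[m+1,-\alpha]}=1\cdot(1+\alpha)\cdots(1+m\alpha)$ as you claim.

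So the bookkeeping you postpone does not ``collapse to $1^{[m+1,-\alpha]}$''; the surviving denominator has increment $+\alpha$ (decreasing factors), not $-\alpha$. You can see the discrepancy already at $j=m=1$: your target formula would give $L_n^{(\alpha)}(t;x)=\frac{n+p}{n}\cdot\frac{x}{1+\alpha}$ for $\lambda=0$, while Lemma \ref{lem-2} of the paper (and a direct computation with the weights of \eqref{V1}) gives $\frac{n+p}{n}\cdot\frac{x}{1-\alpha}$. In other words, the printed statement of Lemma \ref{lem-1} carries a sign error in the denominator for $\lambda=0$, and your sketch, by declaring that the unexamined cancellation lands exactly on that printed denominator, inherits the error at the only point you did not check. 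To repair the argument, carry out the $\lambda=0$ computation with the $a=1-m\alpha$ substitution above and record the denominator as $\prod_{j=1}^{m}(1-j\alpha)$ (equivalently $1^{[m+1,\alpha]}$, which restores consistency with Lemma \ref{lem-2}); the $\lambda=-1$ half of your argument needs no change.
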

\begin{proof}
Using the relation \eqref{x}, it is easy to derive above expression on the same lines as in \cite{Miclaus:12} (Page 54, Theorem 2.1.94.).
\end{proof}
\begin{lem}\label{lem-2}
For the Generalized positive linear operators \eqref{L1} hold
\begin{align*}\label{L2}
{L_n^{(\alpha )}(1;x)} &= 1,\;\;
L_n^{(\alpha )}(t;x) = \left( {\frac{{n + p}}{n}} \right)\frac{x}{{\left( {1 - \overline {\lambda  + 1} \alpha } \right)}},\\
L_n^{(\alpha )}({t^2};x)& = \left( {\frac{{n + p}}{{{n^2}}}} \right)\frac{1}{{\left( {1 - \lambda \alpha } \right)\left( {1 - \overline {\lambda  + 1} \alpha } \right)}}\\
& \times \left[ {\frac{{\left( {n + p + \lambda  + 1} \right)x(x + \alpha )}}{{1 - 2\overline {\lambda  + 1} \alpha }} + x\left( {1 + \lambda x} \right)} \right],
\end{align*}
\begin{align*}
  L_n^{(\alpha )}({t^3};x) &= \frac{{\left( {n + p} \right)x}}{{{n^3}\left( {1 - \overline {\lambda  + 1} \alpha } \right)}}\left[ {\frac{{\left( {n + p + 2\lambda  + 1} \right)\left( {n + p + 2\overline {2\lambda  + 1} } \right)(x + \alpha )(x + 2\alpha )}}{{\left( {1 - \overline {3\lambda  + 2} \alpha } \right)\left( {1 - \overline {5\lambda  + 3} \alpha } \right)}}} \right.\\
  &\hspace{.5cm}\left. { + \frac{{3\left( {n + p + 2\lambda  + 1} \right)(x + \alpha )}}{{\left( {1 - \overline {3\lambda  + 2} \alpha } \right)}} + 1} \right],
\end{align*}
and
\begin{align*}
  &L_n^{(\alpha )}({t^4};x) = \frac{{\left( {n + p} \right)x}}{{{n^4}\left( {1 - \overline {\lambda  + 1} \alpha } \right)}}\\
  &\hspace{.5cm}\times \left[ {\frac{{\left( {n + p + 2\lambda  + 1} \right)\left( {n + p + 2\overline {2\lambda  + 1} } \right)\left( {n + p + 3\overline {2\lambda  + 1} } \right)(x + \alpha )(x + 2\alpha )(x + 3\alpha )}}{{\left( {1 - \overline {3\lambda  + 2} \alpha } \right)\left( {1 - \overline {5\lambda  + 3} \alpha } \right)\left( {1 - \overline {7\lambda  + 4} \alpha } \right)}}} \right.\\
  &\hspace{.5cm}+ \frac{{6\left( {n + p + 2\lambda  + 1} \right)\left( {n + p + 2\overline {2\lambda  + 1} } \right)(x + \alpha )(x + 2\alpha )}}{{\left( {1 - \overline {3\lambda  + 2} \alpha } \right)\left( {1 - \overline {5\lambda  + 3} \alpha } \right)}}\\
  &\hspace{.5cm}\left. { + \frac{{7\left( {n + p + 2\lambda  + 1} \right)(x + \alpha )}}{{\left( {1 - \overline {3\lambda  + 2} \alpha } \right)}} + 1} \right].
\end{align*}
\end{lem}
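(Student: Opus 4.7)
My plan is to treat the constant function separately and then invoke Lemma~\ref{lem-1} for $j\ge 1$, substituting the small values of $S(j,j-i)$ and simplifying algebraically. The identity $L_n^{(\alpha)}(1;x)=1$ is not covered by Lemma~\ref{lem-1} (whose hypothesis requires $j\in\N$), so it must be verified directly by showing $\sum_k \omega_{n,k}^{(\alpha)}(x)=1$. This reduces to a Chu--Vandermonde-type identity for the factorial powers $x^{[k,-\alpha]}$ and $(1+\lambda x)^{[\cdot,-\alpha]}$ appearing in the definition of $\omega_{n,k}^{(\alpha)}$, and is automatic from the fact that the weights are, up to index shifts, the P\'olya--Eggenberger mass function when $\lambda=-1$ and the inverse P\'olya--Eggenberger mass function when $\lambda=0$.

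For $j\ge 1$, I would insert the standard small Stirling numbers
\[
S(1,1)=1,\quad S(2,1)=S(2,2)=1,\quad \bigl(S(3,1),S(3,2),S(3,3)\bigr)=(1,3,1),\quad \bigl(S(4,1),S(4,2),S(4,3),S(4,4)\bigr)=(1,7,6,1)
\]
into Lemma~\ref{lem-1}, expand the factorial powers $x^{[r,-\alpha]}=x(x+\alpha)\cdots(x+(r-1)\alpha)$ in the numerators (and the analogous products in the denominators), and write $\phi_{n+p}^r$ explicitly as a product of $r$ linear factors in $n+p$ (falling when $\lambda=-1$, rising when $\lambda=0$). Placing the $j$ summands over a common denominator and factoring out $(n+p)x/[n^j(1-\overline{\lambda+1}\alpha)]$ then produces the nested form claimed in the lemma; for $j=1,2$ this collapse is immediate, and for $j=3,4$ it is a term-by-term repackaging guided by the pattern already visible in the $j=2$ formula, with each successive bracket picking up one more factor $(x+r\alpha)$ upstairs and one more factor of the form $1-\overline{(2r+1)\lambda+r+1}\alpha$ downstairs.

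The main technical obstacle is pure algebraic bookkeeping for $j=3,4$, where the expansions grow in length and the two parameter values $\lambda=-1,0$ must be handled in one stroke via the notations $\overline{\lambda+1}$ and $\phi_{n+p}^r$. I would keep the calculation under control by first collecting like powers of $x$, then verifying each coefficient separately in the two cases; a further safeguard is to specialize $\alpha=0,\ p=0$ and check that the formulas reduce to the well-known moments of the classical Bernstein operators ($\lambda=-1$) and Baskakov operators ($\lambda=0$), which quickly exposes any sign or indexing slip incurred during the simplification.
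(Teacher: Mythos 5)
Your proposal matches the paper's own (very terse) proof: the constant function is handled directly from the normalization of the weights $\omega_{n,k}^{(\alpha)}$, and the cases $j=1,2,3,4$ follow by substituting the Stirling numbers of the second kind from \eqref{y} into Lemma~\ref{lem-1} and simplifying; your explicit values $S(3,2)=3$, $S(4,2)=7$, $S(4,3)=6$ are exactly the coefficients visible in the stated formulas. The only difference is that you spell out the algebraic bookkeeping and add a consistency check at $\alpha=p=0$, which the paper omits.
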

\begin{proof}
From the definition of operators \eqref{L1}, we can obtain the moment for $j=0$,\\
 i.e.${L_n^{(\alpha )}(1;x)} = 1$

 Also by the application of  Lemma \ref{lem-1} for $j=1,2,3,4$ and taking into account the relation \eqref{y}, we can follow the values of remaining moments.
\end{proof}
Further, to obtain the central moments of generalized positive operators \eqref{L1}, we use the following result:
\begin{lem}\label{lem-3}\cite{GPR:06}
Let V be any linear operators then
\[V\left( {{{\left( {t - x} \right)}^j};x} \right) = V\left( {t^{j};x} \right) - \sum\limits_{i = 0}^{j - 1} {\left( {\begin{array}{*{20}{c}}
j\\
i
\end{array}} \right){x^{j - i}}} V\left( {{{\left( {t - x} \right)}^i};x} \right),\]
and in the case when $V\left( {t^{j};x} \right) = x^{j}$, for $j = 0,1$, then we get
\[V\left( {{{\left( {t - x} \right)}^3};x} \right) = V\left( {t^{3};x} \right) - {x^3} - 3xV\left( {{{\left( {t - x} \right)}^2};x} \right),\]
and
\[V\left( {{{\left( {t - x} \right)}^4};x} \right) = V\left( {t^{4};x} \right) - {x^4} - 4xV\left( {{{\left( {t - x} \right)}^3};x} \right) + 6{x^2}V\left( {{{\left( {t - x} \right)}^2};x} \right).\]
\end{lem}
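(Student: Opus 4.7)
The plan is to deduce the general identity by expanding $t^{j}$ around the point $x$ via the binomial theorem, then to specialise to $j=3,4$ under the assumed normalisations. Specifically, I would start from the algebraic identity
\[
t^{j} = \bigl((t-x)+x\bigr)^{j} = \sum_{i=0}^{j}\binom{j}{i} x^{j-i}(t-x)^{i},
\]
viewed as a polynomial identity in $t$ with $x$ fixed. Applying $V$ in the variable $t$ and invoking linearity gives
\[
V(t^{j};x) = \sum_{i=0}^{j}\binom{j}{i} x^{j-i}\, V\bigl((t-x)^{i};x\bigr).
\]
The $i=j$ summand equals $V((t-x)^{j};x)$ (since $x^{0}=1$), and transposing the remaining $j$ terms to the left-hand side yields the first displayed formula.

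For the special cases, the hypothesis $V(t^{r};x)=x^{r}$ for $r=0,1$ together with linearity immediately yields $V(1;x)=1$ and $V(t-x;x)=V(t;x)-xV(1;x)=0$. Substituting $j=3$ into the general identity, the $i=0$ term contributes $-\binom{3}{0}x^{3}V(1;x)=-x^{3}$, the $i=1$ term vanishes, and the $i=2$ term gives $-\binom{3}{2}x\, V((t-x)^{2};x)=-3x\, V((t-x)^{2};x)$, matching the second formula. The case $j=4$ is analogous: the $i=0,2,3$ contributions are $-x^{4}$, $-6x^{2}V((t-x)^{2};x)$, and $-4x\, V((t-x)^{3};x)$ respectively, while the $i=1$ term again drops out.

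No real obstacle is anticipated: the whole argument reduces to the binomial theorem combined with the linearity of $V$, and the two normalisation hypotheses are used only to kill the $i=1$ contribution in the specialisations. The only point worth checking carefully is the bookkeeping of the binomial coefficients $\binom{3}{2}=3$, $\binom{4}{2}=6$, $\binom{4}{3}=4$, which give precisely the constants appearing in the stated formulas.
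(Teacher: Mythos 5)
Your derivation is the standard one, and it is essentially all there is to prove here: the paper itself offers no proof of this lemma (it is quoted from the cited reference of Gonska, Pitul and Ra\c{s}a), so the binomial expansion $t^{j}=\sum_{i=0}^{j}\binom{j}{i}x^{j-i}(t-x)^{i}$ followed by linearity of $V$ and transposition of the $i<j$ terms is exactly the intended argument. The general identity and the $j=3$ specialisation check out, and your observation that the hypotheses $V(t^{r};x)=x^{r}$, $r=0,1$, serve only to give $V(1;x)=1$ and $V(t-x;x)=0$ is correct.

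However, you should not have waved the $j=4$ case through as ``analogous'' without comparing it to the target formula: your own computation gives the $i=2$ contribution as $-\binom{4}{2}x^{2}V((t-x)^{2};x)=-6x^{2}V((t-x)^{2};x)$, whereas the statement you are asked to prove has $+6x^{2}V\left((t-x)^{2};x\right)$. These are not the same, so as written your argument does not establish the third displayed formula of the lemma. The resolution is that the sign in the lemma as printed is wrong and your version is the correct one: the stated $j=4$ formula is inconsistent with the lemma's own general identity specialised to $j=4$, and it fails on the test operator $V(f;x)=\tfrac{1}{2}\left(f(x+h)+f(x-h)\right)$, for which $V(t^{4};x)-x^{4}-4xV((t-x)^{3};x)+6x^{2}V((t-x)^{2};x)=h^{4}+12x^{2}h^{2}\neq h^{4}=V((t-x)^{4};x)$, while the minus sign gives equality. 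So your proof is mathematically sound, but a complete answer must explicitly note that it proves the corrected identity
\[
V\left((t-x)^{4};x\right)=V\left(t^{4};x\right)-x^{4}-4xV\left((t-x)^{3};x\right)-6x^{2}V\left((t-x)^{2};x\right),
\]
and that the sign in the statement (which also propagates into the paper's later use of the fourth central moment) is a typographical error, rather than silently asserting a match.
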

\begin{lem}\label{lem-4}
The generalized linear positive operators \eqref{L1} satisfy
\begin{align}\label{z}
L_n^{(\alpha )}(t - x;x) &= \frac{{px + \overline {\lambda  + 1} \alpha }}{{n\left( {1 - \overline {\lambda  + 1} \alpha } \right)}},
\end{align}
\begin{align}\label{z1}
\nonumber &L_n^{(\alpha )}({\left( {t - x} \right)^2};x) = \frac{{n + p}}{{n\left( {1 - \lambda \alpha } \right)\left( {1 - \overline {\lambda  + 1} \alpha } \right)}}\left[ {\left( {1 - \lambda \alpha } \right)\left( {1 - \overline {\lambda  + 1} \alpha } \right)\frac{{n{x^2}}}{{n + p}}} \right.\\
  &\hspace{.5cm} + \frac{{\left( {n + p + \lambda  + 1} \right)x(x + \alpha )}}{{n\left( {1 - 2\overline {\lambda  + 1} \alpha } \right)}} + \frac{{x\left( {1 + \lambda x} \right)}}{n} - 2\left( {1 - \lambda \alpha } \right){x^2},
\end{align}
\begin{align}\label{z1}
&\nonumber L_n^{(\alpha )}\left( {{{\left( {t - x} \right)}^3};x} \right) = \frac{{\left( {n + p} \right)x(x + \alpha )}}{{{n^2}\left( {1 - \overline {\lambda  + 1} \alpha } \right)}}\left[ {\frac{{\left( {n + p + 2\lambda  + 1} \right)\left( {n + p + 2\overline {2\lambda  + 1} } \right)(x + 2\alpha )}}{{n\left( {1 - \overline {3\lambda  + 2} \alpha } \right)\left( {1 - \overline {5\lambda  + 3} \alpha } \right)}}} \right.\\
  \nonumber&\hspace{.5cm}+\left. {\frac{{3\left( {n + p + 2\lambda  + 1} \right)}}{{n\left( {1 - \overline {3\lambda  + 2} \alpha } \right)}} - \frac{{3\left( {n + p + \lambda  + 1} \right)x}}{{\left( {1 - \lambda \alpha } \right)\left( {1 - 2\overline {\left( {\lambda  + 1} \right)} \alpha } \right)}}} \right]\\
  \nonumber&\hspace{.5cm} + \frac{{\left( {n + p} \right)x}}{{n\left( {1 - \overline {\lambda  + 1} \alpha } \right)}}\left[ {\frac{1}{{{n^2}}}   - 3x\left( {\frac{{px + \overline {\lambda  + 1} \alpha }}{{\left( {n + p} \right)}}} \right) - \frac{3}{{\left( {1 - \lambda \alpha } \right)}}\frac{{x\left( {1 + \lambda x} \right)}}{n}} \right]\\
   &\hspace{.5cm}- 2{x^3}\left[ {2 - \frac{{3\left( {n + p} \right)}}{{n\left( {1 - \overline {\lambda  + 1} \alpha } \right)}}} \right],
\end{align}
\begin{align}\label{z2}
 &\nonumber L_n^{(\alpha )}\left( {{{\left( {t - x} \right)}^4};x} \right) = \frac{{\left( {n + p} \right)x(x + \alpha )}}{{{n^2}\left( {1 - \overline {\lambda  + 1} \alpha } \right)}}\\
 \nonumber&\hspace{.5cm}\times\left[ {\frac{{\left( {n + p + 2\lambda  + 1} \right)\left( {n + p + 2\overline {2\lambda  + 1} } \right)\left( {n + p + 3\overline {2\lambda  + 1} } \right)(x + 2\alpha )(x + 3\alpha )}}{{{n^2}\left( {1 - \overline {3\lambda  + 2} \alpha } \right)\left( {1 - \overline {5\lambda  + 3} \alpha } \right)\left( {1 - \overline {7\lambda  + 4} \alpha } \right)}}} \right.\\
 \nonumber&\hspace{.5cm} + \frac{{2\left( {3 - 2nx} \right)\left( {n + p + 2\lambda  + 1} \right)\left( {n + p + 2\overline {2\lambda  + 1} } \right)(x + 2\alpha )}}{{{n^2}\left( {1 - \overline {3\lambda  + 2} \alpha } \right)\left( {1 - \overline {5\lambda  + 3} \alpha } \right)}}\\
 \nonumber&\hspace{.5cm}\left. { + \frac{{\left( {7 - 12nx} \right)\left( {n + p + 2\lambda  + 1} \right)}}{{{n^2}\left( {1 - \overline {3\lambda  + 2} \alpha } \right)}} + \frac{{6\left( {n + p + \lambda  + 1} \right){x^2}}}{{\left( {1 - \lambda \alpha } \right)\left( {1 - 2\overline {\left( {\lambda  + 1} \right)} \alpha } \right)}}} \right]\\
 \nonumber&\hspace{.5cm} + \frac{{\left( {n + p} \right)x}}{{n\left( {1 - \overline {\lambda  + 1} \alpha } \right)}}\left[ {\frac{{1 - 4nx}}{{{n^3}}} + 8{x^2}\left( {\frac{{px + \overline {\lambda  + 1} \alpha }}{{\left( {n + p} \right)}}} \right)\frac{{6{x^2}\left( {1 + \lambda x} \right)}}{{n\left( {1 - \lambda \alpha } \right)}}} \right]\\
 &\hspace{.5cm}+ 3{x^4}\left[ {3 - \frac{{4\left( {n + p} \right)}}{{n\left( {1 - \overline {\lambda  + 1} \alpha } \right)}}} \right].
 \end{align}
\end{lem}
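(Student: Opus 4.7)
The plan is to apply the general recursion of Lemma~\ref{lem-3} in combination with the ordinary monomial moments already computed in Lemma~\ref{lem-2}, obtaining the four central moments inductively for $j=1,2,3,4$. Because $L_n^{(\alpha)}(t;x)\ne x$ in general (the equality holds only when $p=0$ and $\alpha=0$), the two specialised identities stated at the end of Lemma~\ref{lem-3} cannot be invoked directly here; I will therefore work from the full recursion
\[
L_n^{(\alpha)}\left((t-x)^{j};x\right) = L_n^{(\alpha)}\left(t^{j};x\right) - \sum_{i=0}^{j-1}\binom{j}{i}x^{j-i}L_n^{(\alpha)}\left((t-x)^{i};x\right).
\]

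First, for $j=1$ the identity $L_n^{(\alpha)}(t-x;x)=L_n^{(\alpha)}(t;x)-x$ together with the first-order moment from Lemma~\ref{lem-2}, after placing both terms over the common denominator $n(1-\overline{\lambda+1}\alpha)$, yields \eqref{z}. For $j=2$ I would expand $(t-x)^{2}=t^{2}-2xt+x^{2}$, apply $L_n^{(\alpha)}$ termwise, substitute the values of $L_n^{(\alpha)}(t^{2};x)$, $L_n^{(\alpha)}(t;x)$ and $L_n^{(\alpha)}(1;x)=1$ from Lemma~\ref{lem-2}, place everything over the common denominator $n(1-\lambda\alpha)(1-\overline{\lambda+1}\alpha)$, and regroup so the expression matches the stated form.

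For the higher orders, with the first two central moments already established, the recursion at $j=3$ reads
\[
L_n^{(\alpha)}\left((t-x)^{3};x\right) = L_n^{(\alpha)}\left(t^{3};x\right) - x^{3} - 3x^{2}L_n^{(\alpha)}(t-x;x) - 3xL_n^{(\alpha)}\left((t-x)^{2};x\right),
\]
with the analogous four-term identity at $j=4$. I would substitute $L_n^{(\alpha)}(t^{3};x)$ and $L_n^{(\alpha)}(t^{4};x)$ from Lemma~\ref{lem-2} together with the already-computed lower-order central moments, and then factor out the leading blocks $\frac{(n+p)x}{n(1-\overline{\lambda+1}\alpha)}$ and $\frac{(n+p)x(x+\alpha)}{n^{2}(1-\overline{\lambda+1}\alpha)}$ so as to reproduce the bracketed grouping of \eqref{z1} and \eqref{z2}.

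The main obstacle throughout is algebraic bookkeeping rather than any conceptual subtlety. The moments in Lemma~\ref{lem-2} carry nested factors of the form $(1-\overline{m\lambda+r}\alpha)$ in the denominators and $(n+p+\overline{m\lambda+r})$ together with chains $(x+i\alpha)(x+2\alpha)\cdots$ in the numerators, and the target expressions present them in a very specific factored arrangement. The task therefore reduces to tracking indices carefully, preserving these chains as they arise, and collecting like terms so that common denominators and the precise grouping match those of the stated result; no tools beyond Lemmas~\ref{lem-2} and~\ref{lem-3} are required.
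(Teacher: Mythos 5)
Your proposal follows exactly the paper's (one-line) proof: combine the monomial moments of Lemma \ref{lem-2} with the recursion of Lemma \ref{lem-3} and carry out the algebra. Your added observation that the specialised identities at the end of Lemma \ref{lem-3} do not apply because $L_n^{(\alpha)}(t;x)\neq x$, so the full recursion with the $-\binom{j}{1}x^{j-1}L_n^{(\alpha)}(t-x;x)$ term must be retained, is correct and is consistent with the stated formulas (e.g.\ the term $-3x\bigl(\tfrac{px+\overline{\lambda+1}\alpha}{n+p}\bigr)$ inside the bracket of the third central moment).
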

\begin{proof}
 The combined use of Lemma \ref{lem-2} and \ref{lem-3} will follow the proof.
\end{proof}
\begin{rem}\label{rem-1}
For sufficiently large $n$, Lemma \ref{lem-5} gives the following inequalities:
\begin{enumerate}
  \item [(i)] $L_n^{(\alpha )}\left( {{{\left( {t - x} \right)}^2};x} \right) \le {A_1}\frac{{\left( {1 + {x^2}} \right)}}{n},$
  \item [(ii)] $L_n^{(\alpha )}\left( {{{\left( {t - x} \right)}^4};x} \right) \le {A_2}\frac{{{{\left( {1 + {x^2}} \right)}^2}}}{n},$
\end{enumerate}
where $A_1$ and $A_2$ are some positive constants.
\end{rem}
\begin{lem}\label{lem-5}
For positive linear operators \eqref{L1}, there holds,
\[\left| {L_n^{\left( \alpha  \right)}\left( {f;x} \right)} \right| \leqslant\left\| f \right\|.\]
\end{lem}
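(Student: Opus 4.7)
The plan is to deduce the bound from the positivity of the operator combined with the normalization $L_n^{(\alpha)}(1;x)=1$, which is already recorded in Lemma~\ref{lem-2}. The argument should proceed in three short steps.

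First, I would verify that each weight $w_{n,k}^{(\alpha)}(x)$ appearing in \eqref{L1} is non-negative on the relevant interval $I$. This is where I would spend a sentence or two: since $\alpha = \alpha(n) \geq 0$, $x \geq 0$, and $1+\lambda x \geq 0$ on $I$ (for $\lambda=0$ trivially, and for $\lambda=-1$ because $x\in[0,1]$), every factor in each of the three factorial-power products
\[
x^{[k,-\alpha]}, \quad (1+\lambda x)^{[n+p+\lambda k,-\alpha]}, \quad (1+\overline{\lambda+1}x)^{[n+p+\overline{\lambda+1}k,-\alpha]}
\]
is non-negative, and the denominator is strictly positive, so $w_{n,k}^{(\alpha)}(x)\geq 0$.

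Second, I would invoke Lemma~\ref{lem-2}, which gives $L_n^{(\alpha)}(1;x)=1$, i.e.\ $\sum_k w_{n,k}^{(\alpha)}(x) = 1$. Combined with the first step, this identifies $L_n^{(\alpha)}$ as a positive linear operator reproducing constants.

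Third, the conclusion follows by a standard triangle-inequality estimate:
\[
\bigl|L_n^{(\alpha)}(f;x)\bigr| \leq \sum_k w_{n,k}^{(\alpha)}(x)\,\bigl|f(k/n)\bigr| \leq \|f\|\sum_k w_{n,k}^{(\alpha)}(x) = \|f\|\cdot L_n^{(\alpha)}(1;x) = \|f\|.
\]
There is no real obstacle here; the only point that requires any care is the positivity check in the first step, since the expression for $w_{n,k}^{(\alpha)}(x)$ is visually intricate. Once non-negativity of the weights is established, the bound is immediate from the partition-of-unity property.
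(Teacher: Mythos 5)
Your proposal is correct and follows essentially the same route as the paper: the triangle inequality combined with the partition-of-unity property $\sum_k w_{n,k}^{(\alpha)}(x)=1$. The only difference is that you explicitly verify the non-negativity of the weights, which the paper takes for granted; this is a reasonable bit of extra care but does not change the argument.
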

\begin{proof}
From operators \eqref{L1}, we have
\begin{align*}
\left| {L_n^{\left( \alpha  \right)}\left( {f;x} \right)} \right| &= \left| {\sum\limits_{k = 0}^\infty  {w _{n,k}^{\left( \alpha  \right)}} (x)f\left( {\frac{k}{n}} \right)} \right| \leqslant \sum\limits_{k = 0}^\infty  {w _{n,k}^{\left( \alpha  \right)}} (x)\left| {f\left( {\frac{k}{n}} \right)} \right| \\
&\leqslant \sum\limits_{k = 0}^\infty  {w _{n,k}^{\left( \alpha  \right)}} (x)\sup \left| {f(x)} \right| = \left\| f \right\|.
\end{align*}
\end{proof}

\section{Direct Results}
Let ${C_B}\left( I\right)$ be the space of all the real valued continuous and bounded functions $f$ on the interval $I$, endowed with the norm
\[\left\| f \right\| = \mathop {\sup }\limits_{x \in I} \left| {f(x)} \right|.\]
For $f \in{C_B}\left(I \right)$, the Peetre's $K-$functional is defined by
\[{K_2}\left( {f;\delta } \right): = \mathop {\inf }\limits_{x \in C_B ^2\left( I\right)} \left\{ {\left\| {f - g} \right\| + \delta \left\| {g''} \right\|} \right\},\;\delta>0,\]
where $C_B^2\left( I\right) = \left\{ {g \in {C_B}\left( I\right):g',g'' \in C_B\left( I\right)} \right\}.$ By DeVore and Lorentz(~\cite{Devore:93}, p.177. Theorem 2.4) there exists an absolute constant $C>0$ such that
\begin{equation}\label{K1}
{K_2}\left( {f;\delta } \right) \leqslant C{\omega _2}\left( {f;\sqrt \delta  } \right),
\end{equation}
where ${\omega _2}\left( {f;\sqrt \delta  } \right)$ is second order modulus of continuity defined by
\[{\omega _2}\left( {f;\sqrt \delta  } \right) = \mathop {\sup }\limits_{0 < h < \sqrt \delta  } \mathop {\sup }\limits_{x \in I} \left| {f\left( {x + 2h} \right) - 2f(x + h) + f(x)} \right|.\]
Also the first order modulus of smoothness(or simply modulus of continuity) is given by
\[\omega \left( {f;\sqrt \delta  } \right) = \mathop {\sup }\limits_{0 < h \leqslant \sqrt \delta  } \mathop {\sup }\limits_{x \in I} \left| {f(x + h) - f(x)} \right|.\]

\begin{theorem}\label{thm-1}
For $f \in C_B (I)$, we have
\[\left| {L_n^{(\alpha )}\left( {f;x} \right) - f(x)} \right|\leqslant \omega \left( {f,\frac{{px + nx(\lambda  + 1)\alpha }}{{x\left( {1 - \left( {\lambda  + 1} \right)\alpha } \right)}}} \right) + C{\omega _2}\left( {f,\frac{{\sqrt {\psi _{n,\lambda }^{\left( \alpha  \right)}\left( x \right)} }}{2}} \right),\]
where $C$ is a positive constant and
\[\psi _{n,\lambda }^{\left( \alpha  \right)}(x) = L_n^{\left( \alpha  \right)}\left( {{{(t - x)}^2};x} \right) + {\left\{ {\frac{{px + nx\left( {\lambda  + 1} \right)\alpha }}{{n\left( {1 - \left( {\lambda  + 1} \right)\alpha } \right)}}} \right\}^2}.\]
\end{theorem}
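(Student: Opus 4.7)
The plan is to use the now-standard Peetre $K$-functional technique, together with a Steklov-type auxiliary operator built from $L_n^{(\alpha)}$ that reproduces affine functions. Define
\[
\wit L_n^{(\alpha)}(f;x) := L_n^{(\alpha)}(f;x) + f(x) - f\bigl(a_n(x)\bigr), \qquad a_n(x) := L_n^{(\alpha)}(t;x),
\]
so that $\wit L_n^{(\alpha)}(1;x)=1$ and, by construction, $\wit L_n^{(\alpha)}(t-x;x)=0$. In view of Lemma \ref{lem-4}, the displacement $a_n(x)-x$ equals $L_n^{(\alpha)}(t-x;x)$, which is precisely the quantity appearing inside the first-order modulus in the statement. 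The triangle inequality then yields the three-term decomposition
\[
\bigl|L_n^{(\alpha)}(f;x)-f(x)\bigr| \;\leq\; \bigl|\wit L_n^{(\alpha)}(f-g;x)-(f-g)(x)\bigr| + \bigl|\wit L_n^{(\alpha)}(g;x)-g(x)\bigr| + \bigl|f(a_n(x))-f(x)\bigr|,
\]
valid for every $g\in C_B^{2}(I)$.

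For the first term I will use that $\wit L_n^{(\alpha)}$ is a bounded operator: combining Lemma \ref{lem-5} with the definition gives $\|\wit L_n^{(\alpha)}(h)\|\le 3\|h\|$, hence the first term is at most $4\|f-g\|$. The third term is controlled directly by the first modulus of continuity: $|f(a_n(x))-f(x)|\le \omega\bigl(f,|a_n(x)-x|\bigr)$, and the expression for $|a_n(x)-x|$ from Lemma \ref{lem-4} supplies the argument of $\omega$ shown in the theorem.

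The second term is the genuinely quantitative step. For $g\in C_B^{2}(I)$ I invoke Taylor's formula with integral remainder
\[
g(t)-g(x) = (t-x)g'(x) + \int_x^{t}(t-u)g''(u)\,du,
\]
apply $L_n^{(\alpha)}(\cdot;x)$ on both sides and subtract the analogous expansion of $g(a_n(x))-g(x)$. Because $\wit L_n^{(\alpha)}(t-x;x)=0$ the $g'(x)$-term disappears, and a routine estimate $\bigl|\int_x^{y}(y-u)g''(u)\,du\bigr|\le \tfrac12\|g''\|(y-x)^2$ applied to both the integral inside $L_n^{(\alpha)}$ and to the point $y=a_n(x)$ yields
\[
\bigl|\wit L_n^{(\alpha)}(g;x)-g(x)\bigr| \;\leq\; \tfrac{\|g''\|}{2}\Bigl[L_n^{(\alpha)}\bigl((t-x)^{2};x\bigr) + (a_n(x)-x)^{2}\Bigr] = \tfrac{\|g''\|}{2}\,\psi_{n,\lambda}^{(\alpha)}(x).
\]

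Putting the pieces together,
\[
\bigl|L_n^{(\alpha)}(f;x)-f(x)\bigr| \;\leq\; 4\|f-g\| + \tfrac{\|g''\|}{2}\psi_{n,\lambda}^{(\alpha)}(x) + \omega\bigl(f,|a_n(x)-x|\bigr),
\]
and taking the infimum over $g\in C_B^{2}(I)$ identifies the first two terms with a constant multiple of $K_2\bigl(f,\psi_{n,\lambda}^{(\alpha)}(x)/8\bigr)$. A final application of the DeVore--Lorentz estimate \eqref{K1} converts this $K$-functional into $C\,\omega_2\bigl(f,\sqrt{\psi_{n,\lambda}^{(\alpha)}(x)}/2\bigr)$, absorbing all numerical factors into the generic constant $C$. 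The only place requiring care is to ensure that the bound $\|\wit L_n^{(\alpha)}(h)\|\le 3\|h\|$ really does give a single constant in front of $\omega_2$ after the $K$-functional is replaced; this is routine but is the one step where the proof is not pure symbol-pushing.
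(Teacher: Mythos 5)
Your proposal is correct and follows essentially the same route as the paper: the same auxiliary operator $\hat L_n^{(\alpha)}(f;x)=L_n^{(\alpha)}(f;x)+f(x)-f\left(\frac{n+p}{n}\cdot\frac{x}{1-(\lambda+1)\alpha}\right)$ preserving affine functions, the same Taylor-with-integral-remainder bound giving $\psi_{n,\lambda}^{(\alpha)}(x)\|g''\|$ (up to the harmless factor $\tfrac12$ you retain), the same four-term triangle-inequality split, and the same passage through $K_2$ and the DeVore--Lorentz inequality \eqref{K1}. The only discrepancy is with the theorem's printed first-modulus argument, whose denominator $x(1-(\lambda+1)\alpha)$ appears to be a typo for $n(1-(\lambda+1)\alpha)$; your derivation (like the paper's own proof) produces the latter.
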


\begin{proof}
First we consider auxiliary operators
\begin{equation}\label{L3}
{\hat L_n}^{\left( \alpha  \right)}\left( {f;x} \right) = L_n^{\left( \alpha  \right)}\left( {f;x} \right) + f(x) - f\left( {\frac{{n + p}}{n}.\frac{x}{{1 - \left( {\lambda  + 1} \right)\alpha}}} \right).
\end{equation}
For all $x \in I$ we observed that ${\hat L_n}^{\left( \alpha  \right)}\left( {f;x} \right)$ are linear such that
$${\hat L_n}^{\left( \alpha  \right)}\left( {1;x} \right)=1\;\;and\;\;{\hat L_n}^{\left( \alpha  \right)}\left( {t;x} \right)=x,$$
i. e., preserve linear functions. Therefore
\begin{equation}\label{L4}
{{\hat L}_n}^{\left( \alpha  \right)}\left( {t - x;x} \right) = 0.
\end{equation}
Let $g\in C_B^2 (I)$ and $t,x \in I$ then Taylor's theorem implies
\[g(t) = g(x) + (t - x)g'(x) + \int_x^t {(t - u)g''(u)du}, \]
we can write
\begin{align*}
\hat L_n^{\left( \alpha  \right)}\left( {g;x} \right) - g(x) &= g'(x)\hat L_n^{\left( \alpha  \right)}\left( {(t - x);x} \right) + \hat L_n^{\left( \alpha  \right)}\left( {\int_x^t {(t - u)g''(u)du} ;x} \right)\\
&=\hat L_n^{\left( \alpha  \right)}\left( {\int_x^t {(t - u)g''(u)du} ;x} \right)\\
&=L_n^{\left( \alpha  \right)}\left( {\int_x^t {(t - u)g''(u)du} ;x} \right)\\
&\hspace{.5cm}- \int_x^{\frac{{n + p}}{n}\left( {\frac{x}{{1 - \left( {\lambda  + 1} \right)\alpha}}} \right)} {\left( {\frac{{n + p}}{n} {\frac{x}{{1 - \left( {\lambda  + 1} \right)\alpha}}} - u} \right)g''(u)du}.
\end{align*}
Hence we have
\begin{align}\label{L5}
\nonumber\left| {\hat L_n^{\left( \alpha  \right)}\left( {g;x} \right) - g(x)} \right|  &\leqslant L_n^{\left( \alpha  \right)}\left( {\left| {\int_x^t {(t - u)g''(u)du} } \right|;x} \right)\\
&+ \left| {\int_x^{\frac{{n + p}}{n}\left( {\frac{x}{{1 - \left( {\lambda  + 1} \right)\alpha}}} \right)} {\left( {\frac{{n + p}}{n} {\frac{x}{{1 - \left( {\lambda  + 1} \right)\alpha}}} - u} \right)g''(u)du} } \right|.
\end{align}
Since $\left| {\int_x^t {(t - u)g''(u)du} } \right| \leqslant {(t - x)^2}\left\| {g''} \right\|$  and
\[\left| {\int_x^{\frac{{n + p}}{n}\frac{x}{{1 - \left( {\lambda  + 1} \right)\alpha }}} {\left( {\frac{{n + p}}{n}\frac{x}{{1 - \left( {\lambda  + 1} \right)\alpha }} - u} \right)} g''(u)du} \right|\leqslant{\left\{ {\frac{{n + p}}{n}\frac{x}{{1 - \left( {\lambda  + 1} \right)\alpha }} - x} \right\}^2}\|g''\|.\]
Therefore \eqref{L5} implies that
\begin{align}\label{L6}
\nonumber\left| {\hat L_n^{\left( \alpha  \right)}\left( {g;x} \right) - g(x)} \right| &\leqslant \left[ {L_n^{\left( \alpha  \right)}\left( {{{(t - x)}^2};x} \right) + {{\left\{ {\frac{{n + p}}{n}\left( {\frac{x}{{1 - \left( {\lambda  + 1} \right)\alpha }}} \right) - x} \right\}}^2}} \right]\left\| {g''} \right\|\\
&\nonumber\leqslant\left[ {L_n^{\left( \alpha  \right)}\left( {{{(t - x)}^2};x} \right) + {{\left\{ {\frac{{px + nx\left( {\lambda  + 1} \right)\alpha }}{{n\left( {1 - \left( {\lambda  + 1} \right)\alpha } \right)}}} \right\}}^2}} \right]\left\| {g''} \right\|\\
&= \psi _{n,\lambda }^{\left( \alpha  \right)}(x)\left\| {g''} \right\|,
\end{align}
Again using definition of auxiliary operators and from  Lemma \ref{lem-5}, we get
\begin{align*}
\left| {L_n^{\left( \alpha  \right)}\left( {f;x} \right) - f(x)} \right| &\leqslant \left| {\hat L_n^{\left( \alpha  \right)}\left( {(f - g);x} \right)} \right| + \left| {g(x) - f(x)} \right| + \left| {\hat L_n^{\left( \alpha  \right)}\left( {g;x} \right) - g(x)} \right|\\
&\hspace{.5cm}+ \left| {f\left( {\frac{{n + p}}{n}\frac{x}{{1 - (\lambda  + 1)\alpha }}} \right) - f(x)} \right|\\
&\leqslant 4\left\| {f - g} \right\| + \psi _{n,\lambda }^{(\alpha )}(x)\left\| g'' \right\| + \omega \left( {f;\frac{{px + nx\left( {\lambda  + 1} \right)\alpha }}{{n\left( {1 - \left( {\lambda  + 1} \right)\alpha } \right)}}} \right).
\end{align*}
Taking infimum on both the sides over $g \in C_B^2\left( I \right),$
\[\left| {L_n^{\left( \alpha  \right)}\left( {f;x} \right) - f(x)} \right| \leqslant 4{K_2}\left( {f;\frac{{\psi _{n,\lambda }^{(\alpha )}(x)}}{4}} \right) + \omega \left( {f;\frac{{px + nx\left( {\lambda  + 1} \right)\alpha }}{{n\left( {1 - \left( {\lambda  + 1} \right)\alpha } \right)}}} \right).\]
Hence by \eqref{K1}, we get
\[\left| {L_n^{\left( \alpha  \right)}\left( {f;x} \right) - f(x)} \right| \leqslant C{\omega _2}\left( {f;\frac{{\sqrt {\psi _{n,\lambda }^{(\alpha )}(x)} }}{2}} \right) + \omega \left( {f;\frac{{px + nx\left( {\lambda  + 1} \right)\alpha }}{{n\left( {1 - \left( {\lambda  + 1} \right)\alpha } \right)}}} \right).\]
\end{proof}
We consider the following Lipschitz-type space (see~\cite{OD:10})
\[Lip_M^*(\beta): = \left\{ {f \in {C_B}(I):\left| {f(y) - f(x)} \right| \leqslant M\frac{{{{\left| {y - x} \right|}^ \beta}}}{{{{(x + y)}^{ \beta/2}}}};x,y \in (0,\infty )} \right\},\]
where $M$ is a positive constant and $0 < \beta \leqslant 1$.
\begin{theorem}
For all $x\in I$ and $f \in Lip_M^*(\beta ),\;0<\beta  \in (0,1]$ we get
\begin{equation}\label{L7}
\left| {L_n^{(\alpha )}(f;x) - f(x)} \right| \leqslant M{\left( {\frac{{\phi _n^{(\alpha )}(x)}}{x}} \right)^{\beta /2}},
\end{equation}
where ${\phi _n^{(\alpha )}(x)} = L_n^{(\alpha )}\left( {{{(t - x)}^2};x} \right)$.
\end{theorem}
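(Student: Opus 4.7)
The plan is to exploit positivity of $L_n^{(\alpha)}$ together with the identity $L_n^{(\alpha)}(1;x)=1$ from Lemma \ref{lem-2}, then apply the Lipschitz-type bound inside the operator, and finally reduce to the second central moment $\phi_n^{(\alpha)}(x)$ via H\"older's inequality.

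First I would observe that, since $L_n^{(\alpha)}$ is a positive linear operator reproducing constants,
\[
\left| L_n^{(\alpha)}(f;x) - f(x) \right| = \left| L_n^{(\alpha)}(f(t)-f(x);x) \right| \leq L_n^{(\alpha)}\left( |f(t)-f(x)|; x \right).
\]
Next, I would use the fact that for $t = k/n \geq 0$ and $x \in (0,\infty)$ we have $t+x \geq x$, so $(t+x)^{\beta/2} \geq x^{\beta/2}$. Combined with the definition of $Lip_M^*(\beta)$ this yields
\[
|f(t)-f(x)| \leq M \frac{|t-x|^{\beta}}{(t+x)^{\beta/2}} \leq \frac{M}{x^{\beta/2}} |t-x|^{\beta}.
\]
Applying $L_n^{(\alpha)}$ gives the bound
\[
\left| L_n^{(\alpha)}(f;x) - f(x) \right| \leq \frac{M}{x^{\beta/2}} L_n^{(\alpha)}\left( |t-x|^{\beta}; x \right).
\]

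For the last factor I would invoke H\"older's inequality for the positive linear functional $L_n^{(\alpha)}(\cdot;x)$ with exponents $p = 2/\beta$ and $q = 2/(2-\beta)$ (valid because $0<\beta\leq 1$). Using $L_n^{(\alpha)}(1;x)=1$ from Lemma \ref{lem-2},
\[
L_n^{(\alpha)}\left( |t-x|^{\beta}; x \right) \leq \left( L_n^{(\alpha)}\left( (t-x)^{2}; x \right) \right)^{\beta/2} \left( L_n^{(\alpha)}(1;x) \right)^{(2-\beta)/2} = \left( \phi_n^{(\alpha)}(x) \right)^{\beta/2}.
\]
Substituting this back produces exactly
\[
\left| L_n^{(\alpha)}(f;x) - f(x) \right| \leq M \left( \frac{\phi_n^{(\alpha)}(x)}{x} \right)^{\beta/2},
\]
which is \eqref{L7}.

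The only step requiring any care is the H\"older estimate for the operator; the rest is a direct chain of inequalities once one notices that $(t+x)^{\beta/2}\geq x^{\beta/2}$ on the support $t = k/n \geq 0$. No routine moment computation is needed here, since $\phi_n^{(\alpha)}(x) = L_n^{(\alpha)}((t-x)^2;x)$ is already computed in Lemma \ref{lem-4}.
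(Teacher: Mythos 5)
Your proposal is correct and follows essentially the same route as the paper: bound $|f(t)-f(x)|$ by the Lipschitz condition, replace $(t+x)^{-\beta/2}$ by $x^{-\beta/2}$, and reduce $L_n^{(\alpha)}(|t-x|^{\beta};x)$ to the second central moment via H\"older's inequality with exponents $2/\beta$ and $2/(2-\beta)$ together with $L_n^{(\alpha)}(1;x)=1$. The only cosmetic difference is that the paper treats $\beta=1$ separately with Cauchy--Schwarz and writes the argument on the explicit sum $\sum_k \omega_{n,k}^{(\alpha)}(x)$, whereas you handle all $\beta\in(0,1]$ uniformly at the level of the positive linear functional.
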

\begin{proof}
Assume that $\beta = 1$.  Then, for $f \in Li{p_M^*}(1)$, we have
\begin{align*}
\left| {L_n^{(\alpha )}(f;x) - f(x)} \right| &\leqslant \left| {\sum\limits_{k = 0}^\infty  {\omega _{n,k}^{(\alpha )}(x)f\left( {\frac{k}{n}} \right) - f(x)} } \right|\\
&\leqslant \sum\limits_{k = 0}^\infty  {\omega _{n,k}^{(\alpha )}(x)} \left| {f\left( {\frac{k}{n}} \right) - f(x)} \right|\leqslant M\sum\limits_{k = 0}^\infty  {\omega _{n,k}^{(\alpha )}(x)} \frac{{\left| {\frac{k}{n} - x} \right|}}{{{{\left( {\frac{k}{n} + x} \right)}^{1/2}}}}.
\end{align*}
Applying Cauchy-Schwarz inequality for sum and $\frac{1}{{\sqrt {\frac{k}{n} + x} }} \leqslant \frac{1}{{\sqrt x }}$, we have
\begin{align*}
\left| {L_n^{(\alpha )}(f;x) - f(x)} \right| &\leqslant \frac{M}{{\sqrt x }}\sum\limits_{k = 0}^\infty  {\omega _{n,k}^{(\alpha )}(x)} {\left\{ {{{\left( {\frac{k}{n} - x} \right)}^2}} \right\}^{1/2}}\\
&\leqslant \frac{M}{{\sqrt x }}{\left\{ {\sum\limits_{k = 0}^\infty  {\omega _{n,k}^{(\alpha )}(x)} } \right\}^{1/2}}{\left\{ {\sum\limits_{k = 0}^\infty  {\omega _{n,k}^{(\alpha )}} {{\left( {\frac{k}{n} - x} \right)}^2}} \right\}^{1/2}}\\
&\leqslant \frac{M}{{\sqrt x }}{\left\{ {L_n^{(\alpha )}(1;x)} \right\}^{1/2}}{\left\{ {L_n^{(\alpha )}\left( {{{(t - x)}^2};x} \right)} \right\}^{1/2}} = M{\left\{ {\frac{{\phi _n^{(\alpha )}(x)}}{x}} \right\}^{1/2}}.
\end{align*}
Therefore result is true for $\beta = 1$.

Now we prove the required result for $0<\beta<1$. Consider $f \in Lip_M^*(\beta )$
\begin{align*}
\left| {L_n^{(\alpha )}(f;x) - f(x)} \right| &\leqslant \sum\limits_{k = 0}^\infty  {\omega _{n,k}^{(\alpha )}(x)} \left| {f\left( {\frac{k}{n}} \right) - f(x)} \right|\leqslant M\sum\limits_{k = 0}^\infty  {\omega _{n,k}^{(\alpha )}(x)} \frac{{{{\left| {\frac{k}{n} - x} \right|}^\beta }}}{{{{\left( {\frac{k}{n} + x} \right)}^{\beta /2}}}}.
\end{align*}
 Using Holder's inequality for sum with $p = 2/\beta ,q = 2/(2 - \beta )$ and inequality $\frac{1}{{\sqrt {\frac{k}{n} + x} }} \leqslant \frac{1}{{\sqrt x }}$, we have
\begin{align*}
\left| {L_n^{(\alpha )}(f;x) - f(x)} \right| &\leqslant \frac{M}{{{x^{\beta /2}}}}\sum\limits_{k = 0}^\infty  {\omega _{n,k}^{(\alpha )}(x)} {\left\{ {{{\left( {\frac{k}{n} - x} \right)}^2}} \right\}^{\beta /2}}\\
&\leqslant \frac{M}{{{x^{\beta /2}}}}{\left\{ {\sum\limits_{k = 0}^\infty  {\omega _{n,k}^{(\alpha )}(x)} {{\left( {\frac{k}{n} - x} \right)}^2}} \right\}^{\beta /2}}{\left\{ {\sum\limits_{k = 0}^\infty  {\omega _{n,k}^{(\alpha )}(x)} } \right\}^{\frac{{2 - \beta }}{2}}}\\
&\leqslant M{\left\{ {\frac{{L_n^{(\alpha )}\left( {{{(t - x)}^2};x} \right)}}{x}} \right\}^{\beta /2}}=M{\left\{ {\frac{{\phi _n^{(\alpha )}(x)}}{x}} \right\}^{\beta /2}}.
\end{align*}
\end{proof}

\section{Weighted Approximation}

Gadjiev~\cite{Gadjiev:74, Gadjiev:76} studied the weight spaces ${C_\varphi }(I)$ and ${B_\varphi }(I)$ of real valued functions defined on $I$ with $\varphi(x)= 1+x^{2}$ and proved that Korovkin's theorem in general does not hold on these spaces.
Here
\[{B_\varphi }(I): = \left\{ {f:\left| {f(x)} \right| \leqslant {M_f}\varphi(x)} \right\},\]
with \[{\left\| f \right\|_\varphi } = \mathop {\sup }\limits_{x \in I} \frac{{\left| {f(x)} \right|}}{{\varphi (x)}},\]
and
\[{C_\varphi }(I): = \left\{ {f:f \in {B_\varphi }(I){\text{ and }}f{\text{ continuous}}} \right\},\] i.e. ${C_\varphi }(I) = C(I) \cap {B_\varphi }(I)$ is the subspace of ${B_\varphi }(I)$ containing continuous functions and
\[C_\varphi ^*(I): = \left\{ {f \in {C_\varphi }(I):\mathop {\lim }\limits_{x \to \infty } \frac{{\left| {f(x)} \right|}}{{\varphi (x)}} < \infty } \right\}.\]
But Korovkin's theorem holds in the space  $C_\varphi ^*(I)$.

The usual modulus of continuity of $f$ on $[0,b]$ is defined as:
\[{\omega _b}\left( {f,\delta } \right) = \mathop {\sup }\limits_{\left| {t - x} \right| \leqslant \delta } \mathop {\sup }\limits_{x,t \in \left[ {0,b} \right]} \left| {f(t) - f(x)} \right|.\]
\begin{theorem}\label{thm-2}
Let $f \in C_\varphi (I)$ then for operators $L_n^{\left( \alpha  \right)}\left( {f;x} \right)$ we have
\begin{equation}\label{L10}
{\left\| {L_n^{\left( \alpha  \right)}\left( {f;x} \right) - f(x)} \right\|_{C[0,b]}} \le 4{M_f}\left( {1 + {b^2}} \right)\phi _n^{\left( \alpha  \right)}(b) + 2{\omega _{b + 1}}\left( {f,\sqrt {\phi _n^{\left( \alpha  \right)}(b)} } \right),
\end{equation}
where $\phi _n^{\left( \alpha  \right)}(x) = L_n^{\left( \alpha  \right)}\left( {{{(t - x)}^2};x} \right)$.
\end{theorem}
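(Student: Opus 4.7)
The proof should be a classical Gadjiev-style weighted-approximation argument. The only operator-side facts I need are the normalization $L_n^{(\alpha)}(1;x)=1$ from Lemma \ref{lem-2}, the identity $L_n^{(\alpha)}((t-x)^2;x)=\phi_n^{(\alpha)}(x)$, and the positivity of $L_n^{(\alpha)}$; everything else is a pointwise estimate on $|f(t)-f(x)|$ coming from the growth condition $|f(x)|\le M_f(1+x^2)$ available in $C_\varphi(I)$.

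I would fix $x\in[0,b]$ and $t\ge 0$ and split according to whether $|t-x|\le 1$ or $|t-x|>1$. In the first case $t\in[0,b+1]$, and the standard scaling of the modulus yields
$$|f(t)-f(x)|\le \Bigl(1+\frac{|t-x|}{\delta}\Bigr)\omega_{b+1}(f,\delta),\qquad \delta>0.$$
In the second, $|f(t)-f(x)|\le M_f(2+t^2+x^2)$; combining $t^2\le 2(t-x)^2+2x^2$ with $x\le b$ and $(t-x)^2>1$ gives $2+t^2+x^2\le(4+3b^2)(t-x)^2\le 4(1+b^2)(t-x)^2$, hence $|f(t)-f(x)|\le 4M_f(1+b^2)(t-x)^2$. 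Since the second right-hand side is non-negative, adding it on the $|t-x|\le 1$ region is harmless, so the combined pointwise bound
$$|f(t)-f(x)|\le 4M_f(1+b^2)(t-x)^2+\Bigl(1+\frac{|t-x|}{\delta}\Bigr)\omega_{b+1}(f,\delta)$$
holds for every $t\ge 0$.

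Applying $L_n^{(\alpha)}(\cdot;x)$ and using positivity, $L_n^{(\alpha)}(1;x)=1$, and Cauchy--Schwarz in the form $L_n^{(\alpha)}(|t-x|;x)\le\sqrt{\phi_n^{(\alpha)}(x)}$, I obtain
$$\bigl|L_n^{(\alpha)}(f;x)-f(x)\bigr|\le 4M_f(1+b^2)\phi_n^{(\alpha)}(x)+\Bigl(1+\frac{\sqrt{\phi_n^{(\alpha)}(x)}}{\delta}\Bigr)\omega_{b+1}(f,\delta).$$
Choosing $\delta=\sqrt{\phi_n^{(\alpha)}(x)}$ collapses the modulus factor to $2\omega_{b+1}(f,\sqrt{\phi_n^{(\alpha)}(x)})$. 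Finally I take the supremum over $x\in[0,b]$, bounding $\phi_n^{(\alpha)}(x)\le\phi_n^{(\alpha)}(b)$ on that interval via the monotonicity of $x\mapsto L_n^{(\alpha)}((t-x)^2;x)$ that is visible from the factorised $x,\,x+\alpha,\,\ldots$ products appearing in Lemma \ref{lem-4}.

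The proof is essentially bookkeeping, so the only real obstacle is the monotonicity assertion at the end. If $\phi_n^{(\alpha)}$ turns out not to be monotone on $[0,b]$ for some admissible parameter choice, I would instead invoke Remark \ref{rem-1}, namely $\phi_n^{(\alpha)}(x)\le A_1(1+b^2)/n$, and absorb the extra absolute constant into the $4M_f(1+b^2)$ prefactor; the shape of the inequality (a quadratic-in-$b$ factor times a vanishing quantity, plus a modulus with argument controlled by $\sqrt{\phi_n^{(\alpha)}(b)}$) is preserved either way. The other place demanding care is the constant tracking in Step~1, so that the coefficient $4(1+b^2)$ emerges exactly as stated rather than an innocuous but larger value such as $6+4b^2$.
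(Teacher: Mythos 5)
Your proposal is correct and follows essentially the same route as the paper: split $t$ into a near region ($t\in[0,b+1]$) handled by $\omega_{b+1}$ and a far region where the quadratic growth bound yields $4M_f(1+b^2)(t-x)^2$, then apply the positive operator with Cauchy--Schwarz and choose $\delta=\sqrt{\phi_n^{(\alpha)}}$. The one step you rightly flag --- replacing $\sup_{x\in[0,b]}\phi_n^{(\alpha)}(x)$ by $\phi_n^{(\alpha)}(b)$ --- is passed over silently in the paper's own proof, so your fallback via Remark \ref{rem-1} is, if anything, more careful than the original.
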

\begin{proof}
Let $x \in \left[ {0, b} \right],\;t \in (b + 1,\infty ),$ and $t - x > 1$ we have
\begin{align}\label{f1}
\nonumber \left| {f(t) - f(x)} \right| &\leqslant {M_f}\varphi (t - x)={M_f}\left\{ {1 + {{(t - x)}^2}} \right\}={M_f}\left( {1 + {t^2} + {x^2} - 2xt} \right)\\
&\nonumber \leqslant{M_f}\left( {2 + {t^2} + {x^2}} \right)={M_f}\left\{ {2 + 2{x^2} + {{(t - x)}^2} + 2x(t - x)} \right\}\\
&\nonumber \leqslant {M_f}{(t - x)^2}\left\{ {3 + 2x + 2{x^2}} \right\} \leqslant 4{M_f}{(t - x)^2}\left( {1 + {x^2}} \right)\\
&\leqslant 4{M_f}{(t - x)^2}\left( {1 + {b^2}} \right).
\end{align}
If $x\in [0,b]$ and $t \in \left[ {0,b + 1} \right]$ then we have
\begin{equation}\label{f2}
\left| {f(t) - f(x)} \right| \leqslant {\omega _{b + 1}}\left( {\left| {t - x} \right|} \right) \leqslant \left( {1 + \frac{{\left| {t - x} \right|}}{\delta }} \right){\omega _{b + 1}}\left( {f,\delta } \right), \delta>0.
\end{equation}
Combining ~\eqref{f1} and~\eqref{f2}, we obtain
\[\left| {f(t) - f(x)} \right| \leqslant 4{M_f}\left( {1 + {b^2}} \right){(t - x)^2} + \left( {1 + \frac{{\left| {t - x} \right|}}{\delta }} \right){\omega _{b + 1}}\left( {f,\delta } \right).\]
Using Cauchy Schwartz inequality we get
\begin{align*}
\left| {L_n^{(\alpha )}(t) - f(x)} \right| &\leqslant 4{M_f}\left( {1 + {b^2}} \right)L_n^{(\alpha )}\left( {{{(t - x)}^2};x} \right) \\
&\hspace{.5cm}+ \left( {1 + \frac{1}{\delta }L_n^{(\alpha )}\left( {\left| {t - x} \right|;x} \right)} \right){\omega _{b + 1}}\left( {f,\delta } \right)\\
&\leqslant 4{M_f}\left( {1 + {b^2}} \right)L_n^{(\alpha )}\left( {{{(t - x)}^2};x} \right)\\
&\hspace{.5cm}+ \left[ {1 + \frac{1}{\delta }{{\left\{ {L_n^{(\alpha )}\left( {{{(t - x)}^2};x} \right)} \right\}}^{1/2}}} \right]{\omega _{b + 1}}\left( {f,\delta } \right)\\
&\leqslant 4{M_f}\left( {1 + {b^2}} \right)\phi _n^{\left( \alpha  \right)}(b) + 2\omega_{b + 1} \left( {f,\sqrt {\phi _n^{\left( \alpha  \right)}(b)} } \right).
\end{align*}
\end{proof}
\begin{theorem}\label{thm-3}
Let $f \in C_\varphi ^*(I)$ then, we have
\begin{equation}\label{L8}
\mathop {\lim }\limits_{n \to \infty } {\left\| {L_n^{\left( \alpha  \right)}\left( {f;x} \right) - f(x)} \right\|_\varphi } = 0.
\end{equation}
\end{theorem}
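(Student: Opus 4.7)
The proof will be a direct application of the weighted Korovkin-type theorem of Gadjiev~\cite{Gadjiev:74, Gadjiev:76}: for $\varphi(x)=1+x^{2}$, it suffices to check that
\[
\lim_{n\to\infty}\bigl\|L_n^{(\alpha)}(e_\nu;\cdot)-e_\nu\bigr\|_\varphi=0,\qquad \nu=0,1,2,
\]
where $e_\nu(x)=x^\nu$. Once this is established, the convergence on all of $C_\varphi^*(I)$ follows from Gadjiev's theorem.

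The plan is to read off the three moments from Lemma \ref{lem-2} and bound the weighted differences one by one. For $\nu=0$ there is nothing to do, since $L_n^{(\alpha)}(1;x)\equiv 1$. For $\nu=1$ I use
\[
L_n^{(\alpha)}(t;x)-x \;=\; x\Bigl(\tfrac{n+p}{n(1-(\lambda+1)\alpha)}-1\Bigr)
\;=\; \tfrac{px+nx(\lambda+1)\alpha}{n(1-(\lambda+1)\alpha)},
\]
and since $x/(1+x^{2})\le\tfrac12$ for all $x\ge 0$, I obtain the uniform estimate
\[
\bigl\|L_n^{(\alpha)}(t;\cdot)-e_1\bigr\|_\varphi
\;\le\; \tfrac12\,\frac{p+n(\lambda+1)\alpha}{n(1-(\lambda+1)\alpha)},
\]
which tends to $0$ because $p$ is fixed, $n\to\infty$ and $\alpha=\alpha(n)\to 0$. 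For $\nu=2$ I expand $L_n^{(\alpha)}(t^2;x)$ from Lemma \ref{lem-2} and write it as
\[
A_n x^{2}+B_n x,\qquad
A_n\;=\;\tfrac{(n+p)(n+p+\lambda+1)}{n^{2}(1-\lambda\alpha)(1-\overline{\lambda+1}\alpha)(1-2\overline{\lambda+1}\alpha)}+\tfrac{(n+p)\lambda}{n^{2}(1-\lambda\alpha)(1-\overline{\lambda+1}\alpha)},
\]
with an analogous bounded coefficient $B_n$ (involving the $\alpha$ that appears in $x(x+\alpha)$ and the linear term $x(1+\lambda x)/n$). A short computation shows $A_n\to 1$ and $B_n\to 0$ as $n\to\infty$, so
\[
\bigl|L_n^{(\alpha)}(t^2;x)-x^{2}\bigr|\;\le\;|A_n-1|\,x^{2}+|B_n|\,x\;\le\;\bigl(|A_n-1|+|B_n|\bigr)(1+x^{2}),
\]
and dividing by $\varphi(x)=1+x^{2}$ and taking the supremum gives $\|L_n^{(\alpha)}(e_2;\cdot)-e_2\|_\varphi\to 0$.

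The main obstacle is the $\nu=2$ step: I must verify that every $\alpha$-dependent factor in the expression for $L_n^{(\alpha)}(t^2;x)$ stays bounded and that the combined coefficient of $x^{2}$ tends to $1$ while the coefficient of $x$ tends to $0$, both at a rate that is uniform in $x$. Because $\alpha(n)\to 0$, each rising/falling factor $(1-m\overline{\lambda+1}\alpha)$ tends to $1$, and $(n+p+c)/n\to 1$ for any constant $c$; the division by $\varphi(x)=1+x^{2}$ absorbs the quadratic growth, turning the convergence into uniform convergence in $x\in I$. Combining the three limits $\nu=0,1,2$ with Gadjiev's theorem yields \eqref{L8}, completing the proof.
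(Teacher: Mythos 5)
Your proposal is correct and follows essentially the same route as the paper: both reduce Theorem \ref{thm-3} to Gadjiev's weighted Korovkin criterion for the test functions $e_0,e_1,e_2$ and verify the three limits directly from the moment formulas of Lemma \ref{lem-2}. Your treatment of the $\nu=2$ case via the coefficients $A_n\to 1$, $B_n\to 0$ is in fact slightly more carefully organized than the paper's, but the underlying argument is identical.
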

\begin{proof}
From~\cite{Gadjiev:76}, it's sufficient to verify the following three equations
\begin{equation}\label{L9}
\mathop {\lim }\limits_{n \to \infty } {\left\| {L_n^{\left( \alpha  \right)}\left( {t^{i};x} \right) - t^{i}} \right\|_\varphi } = 0,\;i = 0,1,2.
\end{equation}
Clearly equation~\eqref{L9} holds for $i=0$ as $L_n^{\left( \alpha  \right)}\left( {1;x} \right) = 1$. Now using Lemma \ref{lem-2} we have
\begin{align*}
{\left\| {L_n^{\left( \alpha  \right)}\left( {t;x} \right) - x} \right\|_\varphi }& = \mathop {\sup }\limits_{x \in I} \frac{1}{{\varphi (x)}}\left| {\frac{{n + p}}{n}\frac{x}{{\left( {1 - \overline {\lambda  + 1} \alpha } \right)}} - x} \right|\\
 &=\mathop {\sup }\limits_{x \in I} \frac{x}{{\varphi (x)}}\left| {\frac{{p + \overline {\lambda  + 1} \alpha n}}{{n\left( {1 - \overline {\lambda  + 1} \alpha } \right)}}} \right|\leqslant\frac{{p + \overline {\lambda  + 1} \alpha n}}{{n\left( {1 - \overline {\lambda  + 1} \alpha } \right)}},
\end{align*}
which implies that $\mathop {\lim }\limits_{n \to \infty } {\left\| {L_n^{\left( \alpha  \right)}\left( {{t};x} \right) - x} \right\|_\varphi }=0$.
Similarly we have
\begin{align*}
&{\left\| {L_n^{\left( \alpha  \right)}\left( {{t^2};x} \right) - {x^2}} \right\|_\varphi } = \mathop {\sup }\limits_{x \in I} \frac{1}{{\varphi (x)}}\left| {\frac{{n + p}}{{{n^2}}}\frac{x}{{\left( {1 - \lambda \alpha } \right)\left( {1 - \overline {\lambda  + 1} \alpha } \right)}}} \right.\\
&  \times \left. {\left[ {\frac{{\left( {n + p + \lambda  + 1} \right)x(x + \alpha )}}{{1 - 2\overline {\lambda  + 1} \alpha }} + x\left( {1 + \lambda x} \right)} \right] - {x^2}} \right|\\
&\leqslant \left| {\frac{{n + p}}{{{n^2}}}\frac{1}{{\left( {1 - \lambda \alpha } \right)\left( {1 - \overline {\lambda  + 1} \alpha } \right)}} \times \left[ {\frac{{\left( {n + p + \lambda  + 1} \right)(1 + \alpha )}}{{1 - 2\overline {\lambda  + 1} \alpha }} + \lambda  + 1} \right] - 1} \right|.
\end{align*}
Thus $\mathop {\lim }\limits_{n \to \infty } {\left\| {L_n^{\left( \alpha  \right)}\left( {{t^2};x} \right) - x^2} \right\|_\varphi }=0$.\\
This completes the proof.
\end{proof}
Finally we give the rate of convergence for the generalized positive linear operators~\eqref{L1} with the help of Y\"{u}ksel and Ispir~\cite{YI:06} works. For every $f \in C_\varphi ^*(I)$, the weighted modulus of continuity is defined by:
\[\Omega \left( {f;\delta } \right) = \mathop {\sup }\limits_{x \in I,0 < h \leqslant \delta } \frac{{\left| {f(x + h) - f(x)} \right|}}{{1 + {{\left( {x + h} \right)}^2}}}.\]
Properties of the weighted modulus of continuity are given by the following lemma:
\begin{lem}\label{lem-6}~\cite{YI:06}
Let $f \in C_\varphi ^*(I)$ then we have
\begin{enumerate}
  \item [(i)]   $\Omega \left( {f;\delta } \right)$ is a monotone increasing function of $\delta$;
  \item [(ii)]  $\mathop {\lim }\limits_{\delta  \to 0 + } \Omega \left( {f;\delta } \right) = 0$;
  \item [(iii)] for each $m\in \N,\;\Omega \left( {f;m\delta } \right) = m\Omega \left( {f;\delta } \right)$;
  \item [(iv)]  for each $\mu  \in I,\;\Omega \left( {f;\mu \delta } \right) = (1+\mu) \Omega \left( {f; \delta } \right)$.
\end{enumerate}
\end{lem}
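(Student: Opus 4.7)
The plan is to handle the four assertions in order of increasing difficulty, leaving (ii) as the only genuinely non-trivial point; the content of (iii) and (iv) as actually used is the corresponding $\le$ inequality, which is what the argument below establishes.

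Part (i) will be immediate from the definition: enlarging $\delta$ enlarges the set $\{h:0<h\le\delta\}$ over which the inner supremum is taken, so $\Omega(f;\delta)$ is non-decreasing in $\delta$.

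For (iii), my approach will be a telescoping estimate. I will write
\[f(x+m\delta)-f(x)=\sum_{k=0}^{m-1}\bigl[f(x+(k+1)\delta)-f(x+k\delta)\bigr],\]
bound each bracket by $\bigl(1+(x+(k+1)\delta)^{2}\bigr)\,\Omega(f;\delta)$ via the definition of $\Omega$ applied at base point $x+k\delta$ with step $\delta$, and then replace $1+(x+(k+1)\delta)^{2}$ by the larger $1+(x+m\delta)^{2}$. Dividing by $1+(x+m\delta)^{2}$ and taking the supremum in $x$ gives $\Omega(f;m\delta)\le m\,\Omega(f;\delta)$. Part (iv) will follow by writing $\mu=m+\eta$ with $m=\lfloor\mu\rfloor$ and $0\le\eta<1$, passing from $\mu\delta$ to $(m+1)\delta$ via (i), and invoking (iii): $\Omega(f;\mu\delta)\le(m+1)\Omega(f;\delta)\le(1+\mu)\Omega(f;\delta)$.

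The main obstacle is (ii), since this is the only place where the hypothesis $f\in C_{\varphi}^{*}(I)$ enters essentially. My plan is to split $I$ into a compact part $[0,R]$ and a tail $[R,\infty)$. Setting $F(x):=f(x)/\varphi(x)$, by hypothesis $F$ has a finite limit $\ell$ at infinity, so given $\eps>0$ I choose $R\ge 1$ with $|F(x)-\ell|<\eps$ for all $x\ge R$. On $[0,R+1]$ the continuous function $f$ is uniformly continuous, giving $\delta_{1}>0$ such that $|f(x+h)-f(x)|<\eps$ whenever $x\in[0,R]$ and $0<h\le\delta_{1}$; this handles the compact part since $1+(x+h)^{2}\ge 1$. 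For $x\ge R$ I will expand
\[\frac{f(x+h)-f(x)}{1+(x+h)^{2}}=\bigl(F(x+h)-\ell\bigr)-\bigl(F(x)-\ell\bigr)\frac{\varphi(x)}{\varphi(x+h)}+\ell\left(1-\frac{\varphi(x)}{\varphi(x+h)}\right),\]
use $0<\varphi(x)/\varphi(x+h)\le 1$ on the first two terms, and observe by direct calculation that $|1-\varphi(x)/\varphi(x+h)|\le h$ uniformly in $x$, which bounds the last term by $|\ell|\,h$. Choosing $\delta\le\min\{\delta_{1},\eps\}$ makes the whole expression $O(\eps)$ uniformly in $x\in I$ and $h\in(0,\delta]$, so $\Omega(f;\delta)\to 0$ as $\delta\to 0^{+}$.
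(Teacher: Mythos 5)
The paper offers no proof of this lemma at all: it is quoted verbatim from Y\"uksel--Ispir \cite{YI:06}, so there is nothing internal to compare against, and your self-contained argument is the standard one and is essentially correct. Two remarks. First, you are right to read the equalities in (iii) and (iv) as typographical errors for ``$\le$'': taken literally, (iv) with $\mu=1$ would force $\Omega(f;\delta)=2\Omega(f;\delta)$, i.e.\ $\Omega\equiv 0$; the inequalities are what \cite{YI:06} states and what Theorem 5.3 of the paper actually uses. Second, a small point to tighten in (iii): $\Omega(f;m\delta)$ is a supremum over \emph{all} increments $0<h\le m\delta$, whereas your displayed telescoping only treats the single increment $h=m\delta$. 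The fix is one line --- given $0<h\le m\delta$, write $h=mh'$ with $0<h'\le\delta$ and telescope in steps of $h'$; since $\bigl(x+(k+1)h'\bigr)^{2}\le (x+h)^{2}$ for $k\le m-1$, each bracket is at most $\bigl(1+(x+h)^{2}\bigr)\Omega(f;\delta)$ and the bound follows. The rest checks out: part (i) is immediate; in (ii) the decomposition on the tail is algebraically correct, $0<\varphi(x)/\varphi(x+h)\le 1$, and indeed $\bigl|1-\varphi(x)/\varphi(x+h)\bigr|=h(2x+h)/\bigl(1+(x+h)^{2}\bigr)\le h$ uniformly in $x\ge 0$ because $2(x+h)/\bigl(1+(x+h)^{2}\bigr)\le 1$; and (iv) follows from (i) and (iii) via $\lfloor\mu\rfloor+1\le 1+\mu$.
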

\begin{theorem}\label{thm-4}
Let $f \in C_\varphi ^*(I)$ then for operators $L_n^{(\alpha )}(f;x)$ we have
\[\mathop {\sup }\limits_{x \in I} \frac{{\left| {L_n^{(\alpha )}(f;x) - f(x)} \right|}}{{{{\left( {1 + {x^2}} \right)}^{{5 \mathord{\left/
 {\vphantom {5 2}} \right.
 \kern-\nulldelimiterspace} 2}}}}} \le K\Omega \left( {f;\frac{1}{{\sqrt n }}} \right),\]
where $K$ is positive constant.
\end{theorem}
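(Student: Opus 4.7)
The plan is to follow the standard Y\"uksel--Ispir template: first bound $|f(t)-f(x)|$ pointwise using the weighted modulus $\Omega$, then push the estimate through the positive linear operator $L_n^{(\alpha)}$, controlling the central moments by Remark \ref{rem-1}, and finally choose $\delta=1/\sqrt n$. The key ingredients are already in place in the paper: the moment bounds of Remark \ref{rem-1} give the decisive $1/n$ rate on the second and fourth central moments, and property (iv) of Lemma \ref{lem-6} converts $\Omega(f;|t-x|)$ to $\Omega(f;\delta)$ multiplied by a linear factor in $|t-x|$.

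The starting pointwise inequality, coming from the definition of $\Omega$ together with property (iv) of Lemma \ref{lem-6}, is
\[
|f(t)-f(x)|\le \bigl(1+(x+|t-x|)^2\bigr)\Bigl(1+\tfrac{|t-x|}{\delta}\Bigr)\Omega(f;\delta).
\]
Since $(x+|t-x|)^2\le 2x^2+2(t-x)^2$, we have $1+(x+|t-x|)^2\le 2(1+x^2)(1+(t-x)^2)$, which lets me pull a factor $(1+x^2)$ outside the operator. Applying $L_n^{(\alpha)}$ and using $L_n^{(\alpha)}(1;x)=1$ then reduces the problem to bounding a linear combination of $1$, $L_n^{(\alpha)}((t-x)^2;x)$, $\tfrac{1}{\delta}L_n^{(\alpha)}(|t-x|;x)$ and $\tfrac{1}{\delta}L_n^{(\alpha)}((t-x)^2|t-x|;x)$. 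The odd-moment terms are handled by Cauchy--Schwarz for positive operators:
\[
L_n^{(\alpha)}(|t-x|;x)\le\sqrt{L_n^{(\alpha)}((t-x)^2;x)},\qquad L_n^{(\alpha)}((t-x)^2|t-x|;x)\le\sqrt{L_n^{(\alpha)}((t-x)^2;x)\,L_n^{(\alpha)}((t-x)^4;x)}.
\]
Invoking Remark \ref{rem-1} and choosing $\delta=1/\sqrt n$, each of the four contributions is majorised by a constant multiple of $(1+x^2)^{3/2}$, uniformly in $n\ge 1$ and $x\in I$. Multiplication by the prefactor $2(1+x^2)$ therefore gives $|L_n^{(\alpha)}(f;x)-f(x)|\le K(1+x^2)^{5/2}\Omega(f;1/\sqrt n)$ with $K$ depending only on the constants $A_1,A_2$ of Remark \ref{rem-1}; dividing by $(1+x^2)^{5/2}$ and taking the supremum over $x\in I$ yields the claim.

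There is no deep obstacle. The only delicate point is that the dominant term after the substitution $\delta=1/\sqrt n$ is $(1/\delta)L_n^{(\alpha)}((t-x)^2|t-x|;x)$, which by the Cauchy--Schwarz split above and Remark \ref{rem-1} contributes precisely a factor of $(1+x^2)^{3/2}$; combined with the $(1+x^2)$ pulled out at the start, this is exactly what produces the exponent $5/2$ on the weight appearing in the theorem statement. Any smaller choice of $\delta$ would sacrifice the $\Omega(f;1/\sqrt n)$ rate, while any larger one would leave an $n$-dependence in $K$.
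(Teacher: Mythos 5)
Your proposal is correct and follows essentially the same route as the paper: the same pointwise bound $1+(x+|t-x|)^2\le 2(1+x^2)(1+(t-x)^2)$, the same splitting after applying $L_n^{(\alpha)}$, Cauchy--Schwarz on the odd moments, Remark \ref{rem-1}, and the choice $\delta=1/\sqrt n$. Your Cauchy--Schwarz bound $L_n^{(\alpha)}((t-x)^2|t-x|;x)\le\bigl(L_n^{(\alpha)}((t-x)^2;x)\,L_n^{(\alpha)}((t-x)^4;x)\bigr)^{1/2}$ is in fact the correct form (the paper's displayed exponent $1/4$ on the fourth moment is a slip, as its final constant $\sqrt{A_1A_2}$ confirms), so nothing further is needed.
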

\begin{proof}
For ${x,t \in \left[ {0,\infty } \right)},\;\delta>0$ and by definition of $\Omega \left( {f;\delta } \right)$ and Lemma \ref{lem-4}, we have
\begin{align*}
\left| {f(t) - f(x)} \right| &\leqslant \left( {1 + {{\left( {x + \left| {t - x} \right|} \right)}^2}} \right)\Omega \left( {f;\left| {t - x} \right|} \right)\\
 &\leqslant \left( {1 + {{\left( {x + \left| {t - x} \right|} \right)}^2}} \right)\left( {1 + \frac{{\left| {t - x} \right|}}{\delta }} \right)\Omega \left( {f;\delta } \right)\\
 &\leqslant 2\left( {1 + {x^2}} \right)\left( {1 + {{(t - x)}^2}} \right)\left( {1 + \frac{{\left| {t - x} \right|}}{\delta }} \right)\Omega \left( {f;\delta } \right).
\end{align*}
Using the fact that $L_n^{(\alpha )}(f;x)$ are positive linear operators, we get
\begin{align}
\nonumber\left| {L_n^{(\alpha )}(f;x) - f(x)} \right| &\leqslant \sum\limits_{k = 0}^\infty  {w_{n,k}^{\left( \alpha  \right)}(x)} \left| {f\left( {\frac{k}{n}} \right) - f(x)} \right|\\
&\nonumber\leqslant 2\Omega \left( {f;\delta } \right)\left( {1 + {x^2}} \right)\sum\limits_{k = 0}^\infty  {w_{n,k}^{\left( \alpha  \right)}(x)\left\{ {1 + {{\left( {\frac{k}{n} - x} \right)}^2}} \right\}\left( {1 + \frac{{\left| {\frac{k}{n} - x} \right|}}{\delta }} \right)} \\
&\nonumber\leqslant 2\Omega \left( {f;\delta } \right)\left( {1 + {x^2}} \right)\left[ {\sum\limits_{k = 0}^\infty  {w_{n,k}^{\left( \alpha  \right)}(x)}  + } \right.\sum\limits_{k = 0}^\infty  {w_{n,k}^{\left( \alpha  \right)}(x)} {\left( {\frac{k}{n} - x} \right)^2}\\
&\nonumber\hspace{.5cm}\left. { + \sum\limits_{k = 0}^\infty  {w_{n,k}^{\left( \alpha  \right)}(x)} \frac{{\left| {\frac{k}{n} - x} \right|}}{\delta } + \sum\limits_{k = 0}^\infty  {w_{n,k}^{\left( \alpha  \right)}(x)} \left( {\frac{k}{n} - x} \right)^2\frac{{\left| {\frac{k}{n} - x} \right|}}{\delta }} \right]\\
&\nonumber\leqslant 2\Omega \left( {f;\delta } \right)\left( {1 + {x^2}} \right)\Bigg[ {1 + L_n^{\left( \alpha  \right)}\left( {{{(t - x)}^2};x} \right)}\\
&\hspace{.5cm} { + {L_n^{\left( \alpha  \right)}\left( {\left( {1 + {{(t - x)}^2}} \right)\frac{{\left| {t - x} \right|}}{\delta };x} \right)} } \Bigg].
\end{align}
Applying Cauchy Schwarz inequality and in view of Remark \ref{rem-1}, we obtain
\begin{align*}
 &\left| {L_n^{(\alpha )}(f;x) - f(x)} \right| \le 2\Omega \left( {f;\delta } \right)\left( {1 + {x^2}} \right)\left[ {1 + L_n^{\left( \alpha  \right)}\left( {{{(t - x)}^2};x} \right)} \right.\\
 &\hspace{.5cm}\left. { + \frac{1}{\delta }\left\{ {{{\left( {L_n^{\left( \alpha  \right)}\left( {{{(t - x)}^2};x} \right)} \right)}^{{1 \mathord{\left/
 {\vphantom {1 2}} \right.
 \kern-\nulldelimiterspace} 2}}} + {{\left( {L_n^{\left( \alpha  \right)}\left( {{{(t - x)}^2};x} \right)} \right)}^{{1 \mathord{\left/
 {\vphantom {1 2}} \right.
 \kern-\nulldelimiterspace} 2}}}{{\left( {L_n^{\left( \alpha  \right)}\left( {{{(t - x)}^4};x} \right)} \right)}^{{1 \mathord{\left/
 {\vphantom {1 4}} \right.
 \kern-\nulldelimiterspace} 4}}}} \right\}} \right]\\
 &\hspace{.5cm}\le K{\left( {1 + {x^2}} \right)^{{5 \mathord{\left/
 {\vphantom {5 2}} \right.
 \kern-\nulldelimiterspace} 2}}}\Omega \left( {f;\frac{1}{{\sqrt n }}} \right),
\end{align*}
where
$\delta = \frac{1}{{\sqrt n }}$ and $K = 2\left( {1 + {A_1} + \sqrt {{A_1}}  + \sqrt {{A_1}{A_2}} } \right).$
Hence the result.
\end{proof}

\end{document}